\newcommand{\R}{\mathbb{R}}
\newcommand{\RP}{\mathbb{R}\textrm{P}}
\newcommand{\CP}{\mathbb{C}\textrm{P}}
\newcommand{\C}{\mathbb{C}}
\newcommand{\EE}{\mathbb{E}}
\DeclareMathOperator\sgn{sgn}
\newtheorem{thm}{Theorem}
\newtheorem{lemma}[thm]{Lemma}
\newtheorem{cor}[thm]{Corollary}
\newtheorem{prop}[thm]{Proposition}
\newtheorem{claim}{Claim}
\theoremstyle{remark} 
\newtheorem{remark}[]{Remark}
\newcommand{\be}{\begin{equation}}
\newcommand{\ee}{\end{equation}}
   \edef\Gin@extensions{\Gin@extensions,.mps}
\title[Random fields and enumerative geometry]{Random fields and the enumerative geometry of lines on real and complex hypersurfaces}
\author{Saugata Basu}
\address{Purdue University}
\email{sbasu@purdue.edu}
\author{Antonio Lerario}
\address{SISSA (Trieste) and Florida Atlantic University}
\email{lerario@sissa.it}
\author{Erik Lundberg}
\address{Florida Atlantic University}
\email{elundber@fau.edu}
\author{Chris Peterson}
\address{Colorado State Univeristy}
\email{peterson@math.colostate.edu }
\begin{document}

\begin{abstract}We derive a formula expressing the average number 
$E_n$ of real lines on a random hypersurface of degree $2n-3$ in $\RP^n$ 
in terms of the expected modulus of the determinant of a special random matrix. 
In the case $n=3$ we prove that the average number of real lines on a random cubic surface in $\RP^3$ equals: 
\be E_3=6\sqrt{2}-3.\ee
Our technique
can also be used to 
express the number $C_n$ of complex lines on 
a \emph{generic} hypersurface of degree $2n-3$ in $\CP^n$ 
in terms of the determinant of a random Hermitian matrix. 
As a special case we obtain a new proof of the classical statement $C_3=27.$ 

We determine, at the logarithmic scale, the asymptotic of the quantity $E_n$, by relating 
it to $C_n$ (whose asymptotic has been recently computed in \cite{seq}). 
Specifically we prove that:
\be \lim_{n\to \infty}\frac{\log E_n}{\log C_n}=\frac{1}{2}.\ee

Finally we show that this approach can be used to compute the number $R_n=(2n-3)!!$ of real lines, counted with their \emph{intrinsic signs} (as defined in \cite{OkTel}), on a generic real hypersurface of degree $2n-3$ in $\RP^n$. \end{abstract}
\maketitle
\section{Introduction}\subsection{Lines on a random cubic}
One of the most classical statements 
from enumerative geometry asserts that there are $27$ lines lying on a generic cubic surface in $\CP^3$: Cayley proved this result in the nineteenth century \cite{Cayley}. Today many proofs exist, some more geometric (using the isomorphism between the blowup of $\CP^2$ at six generic points and the generic cubic in $\CP^3$),  others more topological (exploiting properties of Chern classes), see for example \cite{Dolgachev, 3264}. (A new proof, based on a probabilistic argument, will be given in this paper, see Corollary \ref{cor:C} below.) 

In this paper we will mostly be interested in similar problems over the reals, where the situation is more complicated. Schl\"afli \cite{Schlafli}
showed that if the cubic surface is \emph{real} and smooth then the number of real lines lying on it is 
either 27, 15, 7, or 3. In the blow-up point of view, these correspond respectively to the following four cases for the blow-up points: all of them are real (27); 4 are real and the other two are complex conjugate (15); 2 are real and there are two pairs of complex conjugate (7); there are 3 pairs of complex conjugate (3). Reading the number of real lines from the coefficients of 
the polynomial $f$ defining the cubic is a 
difficult problem.
It is interesting to ask for a probabilistic treatment:
\be\label{q:intro}\emph{``How many real lines are expected to lie on a random real cubic surface in $\mathbb{R}\emph{P}^3$?''}.\ee

To make this question rigorous we should clarify what we mean by ``random''. 
Here we will sample $f$ from the so-called \emph{Kostlan} ensemble. 
We endow the space $\R[x_0, x_1, x_2, x_3]_{(3)}$ 
of real homogeneous polynomials of degree $3$
with a probability distribution by
defining a random polynomial $f$ as a linear combination:
\be f=\sum_{|\alpha|=3}\xi_{\alpha}x_0^{\alpha_0}x_1^{\alpha_1}x_2^{\alpha_2}x_3^{\alpha_3}, \quad \alpha=(\alpha_0, \ldots, \alpha_3),\ee
where the coefficients $\xi_\alpha$ are real, independent, 
centered Gaussian variables with variances 
\be \sigma_\alpha^2=\frac{3!}{\alpha_0!\alpha_1!\alpha_2!\alpha_3!}.\ee
(A similar definition is considered for homogeneous polynomials of degree $d$ in 
several variables, see Section \ref{sec:kostlan} below.) 
This probability distribution is $O(4)$-invariant,
that is, it is invariant under an orthogonal change of variables 
(so there are no preferred points or directions in $\RP^3$).
Moreover it is the only $O(4)$-invariant Gaussian 
probability distribution on $\R[x_0, x_1, x_2, x_3]_{(3)}$ for which $f$ 
can be defined as a
linear combination of monomials with independent Gaussian coefficients. This distribution is also natural from the point of view of algebraic geometry, as it can be equivalently obtained by sampling a random polynomial uniformly from the (projectivization of the) space of real polynomials with the metric induced by the inclusion in the space of complex polynomials with the Fubini-Study metric, see Section \ref{sec:kostlan} below for more details.

Let $Gr(2,4)$ denote the Grassmannian of two dimensional subspaces of $\R^4$ (or equivalently, the Grassmannian of lines in $\RP^3$).
Denoting by $\tau_{2,4}$ the tautological bundle of $Gr(2,4)$ we see that the homogeneous cubic polynomial $f$ defines, by restriction, a section $\sigma_f$ of $\textrm{sym}^3(\tau^{*}_{2, 4})$. A line $\ell$ lies on the cubic surface defined by $\{f=0\}$ if and only if $\sigma_f(\ell)=0$. In this way, when $f$ is random, $\sigma_f$ is a random section of $\textrm{sym}^3(\tau^{*}_{2, 4})$  and the section $\sigma_f$ can be considered as a generalization of a \emph{random field}. In this case, the random field takes values in a vector bundle but on a trivializing chart it reduces to the standard construction (see \cite{AdlerTaylor}). There is a powerful technique that allows one to compute the average number of zeros of a random field, the so called \emph{Kac-Rice formula} \cite{AdlerTaylor}. The adaptation of this technique to random sections of vector bundles is straightforward (we will discuss it in the proofs below) 
and yields the existence of a function $\rho:Gr(2,4)\to \R$ (the \emph{Kac-Rice density}) such that the answer to question \eqref{q:intro} can be written as:
\be E_3=\int_{Gr(2,4)}\rho\,\omega_{Gr(2,4)}\ee
(here $\omega_{Gr(2,4)}$ is the volume density of $Gr(2,4)$ with respect to the metric induced by the Pl\"ucker embedding). In our case, by invariance of the problem under the action of the 
orthogonal group, $\rho$ is a constant function.
However, the computation of this constant is still delicate 
and is one of the main results of this paper.  To be precise, we show in Theorem \ref{thm:cubic} that its value equals:
\be \rho= \frac{6\sqrt{2}-3}{2\pi^2}.\ee
Since the volume of $Gr(2,4)$ is equal to $2\pi^2$, we are able to conclude that $E_3=6\sqrt{2}-3.$

Notice that over the complex numbers
whatever reasonable (i.e. absolutely continuous 
with respect to Lebesgue measure) probability distribution 
we consider on the space of the coefficients of the defining polynomial, 
a random complex cubic surface will be generic in the sense of algebraic geometry with probability one (this is essentially due to the fact that complex discriminants have real codimension two). 
For this reason our technique, 
which produces an \emph{expected} answer over the reals, 
gives the \emph{generic} answer when adapted to the complex setting.

If we take the coefficients of $f$ to be \emph{complex} Gaussians 
(with the same variances as for the real case) we get a \emph{complex Kostlan} polynomial. Equivalently we can sample from the projectivization of the space of complex polynomials with the natural Fubini-Study metric (see Section \ref{sec:kostlan} below). In this way $\sigma_f$ is a random holomorphic section of $\textrm{sym}^{3}(\tau^*_{2,4})$ where $\tau_{2,4}$ is now the tautological bundle of $Gr_{\C}(2,4)$ (the Grassmannian of complex two dimensional subspaces of $\C^4$). Here again one can use the Kac-Rice approach, interpreting $4$ complex variables as $8$ real variables and $C_3$ can be rewritten as an integral (with respect to the volume density induced by the complex Pl\"ucker embedding) over $Gr_\C(2,4)$ of a function $\rho_\C$. 
The model is invariant under the  action of $U(4)$ so that $\rho_\C$ is constant. 
The evaluation of this constant is now much easier than its real analogue 
(see Corollary \ref{cor:C}), and it seems more combinatorial in nature. We compute the value to be:
\be \rho_\C\equiv \frac{324}{\pi^4}.\ee
The volume of $Gr_\C(2,4)$ is equal to $\frac{\pi^4}{12}$ and this implies $C_3=27$.

Going back to the real case, Segre \cite{Segre} divided the real lines lying on a cubic surface into {\it hyperbolic} lines and {\it elliptic} lines, and showed that the difference between the number, $h$, of hyperbolic lines and 
the number, $e$, of elliptic lines always satisfy the relation $h-e=3$. In this direction we note that the technique below can also be used to compute this difference; this will correspond to a signed Kac-Rice density and its computation amounts to removing the modulus in the expectation of the determinant of the matrix in Theorem \ref{thm:averagereal}. Here we know that the signed count is an invariant number $R_3$ of the generic cubic \cite{OkTel} and consequently that the average answer is the generic one; the explicit computation $R_3=\frac{3}{2}\EE\det \hat{J}_3=3$ recovers this number.

\begin{remark}Allcock, Carlson, and Toledo \cite{ACD} have studied the moduli space of real cubic surfaces form the point of view of hyperbolic geometry. They compute the orbifold Euler characteristic (which is proportional to the hyperbolic volume) of each component of the moduli space. One could take the weighted average of the number of real lines, weighted by the volume of the corresponding component, as an ``average count''. This yields the number $\frac{239}{37}$, see \cite[Table 1.2]{ACD}. However this way of counting does not have a natural probabilistic interpretation and generalization to $n> 3$.
\end{remark}
\subsection{Lines on hypersurfaces}\label{sec:discussion}
The same scheme can be applied to the problem of enumeration of lines on hypersurfaces of degree $2n-3$ in $\RP^n$ and $\CP^n$ (if the degree is larger than $2n-3$ then a random hypersurface is too ``curved'' and will contain no lines
while if the degree is smaller than $2n-3$ then lines will appear in families). 
Theorem \ref{thm:averagereal} and its complex analogue Theorem \ref{thm:averagecomplex} give a general recipe for the computation of the average number $E_n$ of real lines on a real Kostlan hypersurface and the number $C_n$ of lines on a generic complex hypersurface.
Zagier \cite{seq} showed that $C_n$ can be computed as the coefficient of $x^{n-1}$ in the polynomial 
\be p_n(x)=(1-x) \prod_{j=0}^{2n-3} (2n-3-j+jx) \ee and found that the asymptotic behavior for $C_n$ as $n\to \infty$ is given by:
\be \label{thm:DZ} C_n\sim \sqrt{\frac{27}{\pi}}(2n-3)^{2n-\frac{7}{2}}(1+O(n^{-1})).\ee
One could notice that the expression that we derive for $C_n$ in Theorem \ref{thm:averagecomplex} looks almost like the square of the analogous expression for $E_n$ from Theorem \ref{thm:averagereal}. 
In Theorem \ref{thm:sqrt} we will make this comparison more rigorous, by proving that the two quantities are related by:
\be\label{eq:sqrt} \lim_{n\to \infty}\frac{\log E_n}{\log C_n}=\frac{1}{2}.\ee
This is roughly saying that the average number of real lines on a random real hypersurface is the ``square root'' of the number of complex lines (this is true up to an exponential factor; note that the number $E_n$ and $C_n$ are super-exponential). 

After this paper was written, it was brought to our attention 
in a private communication
that Peter B\"urgisser conjectured the 
square root law (as stated in Theorem \ref{thm:sqrt})
in a talk given at a conference in 2008, at Bernoulli Center in Lausanne, 
where he also gave numerical evidence with regards to the value of $E_3$.

Remarkably, Okonek and Teleman \cite{OkTel} and Finashin and Kharlamov \cite{finkh} have shown that over the reals it is still possible to associate a numerical invariant $R_n$ to a generic hypersurface, counting the number of real lines with canonically assigned \emph{signs}. 
Even though the bundles $\textrm{sym}^{2n-3}(\tau^*_{2, n+1})$ and $TGr(2, n+1)$ might not be orientable themselves, Okonek and Teleman \cite{OkTel} introduce a notion of \emph{relative orientation} between them that allows one to define an Euler number up to a sign. In this case it turns out there is a canonical relative orientation, and each zero of a generic section of $\textrm{sym}^{2n-3}(\tau^*_{2, n+1})$ comes with an \emph{intrinsic sign} (see Section \ref{sec:lower} below). The relative Euler number has the usual property that the sum of the zeroes of a generic section counted with signs is invariant; for the problem of lines on hypersurfaces it equals: 
\be R_n=(2n-3)!!.\ee
In particular note that $E_n\geq (2n-3)!!$. Using the random approach, in Proposition \ref{propo:signed} we give an alternative proof of $R_n=(2n-3)!!$.

We summarize this discussion by stating the following inequalities, that offer a broad point of view (here $b>1$ is a universal constant whose existence is proved in Proposition \ref{propo:upper}):
\be R_n\leq E_n\leq b^n C_n^{1/2}.\ee

Future work will include other applications of the probabilistic approach to complex enumerative and random real enumerative geometry.

\subsection*{Acknowledgements}
The question \eqref{q:intro} motivating this paper was posed to the second author by F. Sottile. 
This paper originated during the stay of the authors at SISSA (Trieste), 
supported by Foundation Compositio Mathematica.
\section{The real case}
\subsection{Real Kostlan polynomials}\label{sec:kostlan}

 
Let $E=\R^{n+1}$. Then, $\textrm{sym}^d(E^*)$ is identified with the space,
$\R[x_0, x_1,\ldots, x_n]_{(d)}$, 
of homogeneous polynomials of degree $d$ in $n$ variables.
A Gaussian ensemble can be specified by choosing 
a scalar product on 
$\textrm{sym}^d(E^*)$
in which case
$f$ is sampled according to the law:
$$\text{Probability} (f\in A) = \frac{1}{v_{n,d}}\int_A e^{-\frac{\|f \|^2}{2}} df, $$
where $v_{n,d}$  is a normalizing constant that makes the integrand into a probability density function,
and $df$ is the volume form on 
$\textrm{sym}^d(E^*)$
induced by the chosen scalar product. 
The ensemble we will consider, known as the \emph{Kostlan ensemble},
results from choosing as a scalar product 
the Bombieri product\footnote{This inner product
has also been referred to as 
the ``Fischer product'', especially in the field of holomorphic PDE
after H.S. Shapiro made a detailed study \cite{Sh} reviving
methods from E. Fischer's 1917 paper \cite{Fi}. 
The names of 
H. Weyl, V. Bargmann, and V. A. Fock have also been attached to this inner product.}, defined as:
 $$\langle f, g \rangle_B = \frac{1}{d! \pi^{n+1}}\int_{\C^{n+1}} f(z) \overline{g(z)} e^{-\|z\|^2} dz.$$
The following expression relates the Bombieri norm of $f=\sum_{|\alpha|=d}f_\alpha z_1^{\alpha_0}\cdots z_n^{\alpha_n}$ with its coefficients in the monomial basis (see \cite[Equation (10)]{NewmanShapiro}):
\be
\|f\|_B=\left(\sum_{|\alpha|=D}|f_\alpha|^2\frac{\alpha_0!\cdots \alpha_n!}{d!}\right)^{\frac{1}{2}}.\ee
The Bombieri product  defined above can be described in a more algebro-geometric language as follows. The standard Hermitian inner product on 
$\mathbb{C}^{n+1}$ induces a Hermitian inner product, $(\cdot,\cdot)_d$ on the line bundles $\mathcal{O}_{\CP^n}(d)$, and this in turn induces a
Hermitian inner product on the finite-dimensional space of holomorphic sections, $H^0(\mathcal{O}_{\CP^n}(d))$,  by
\[
\langle f,g \rangle = \int_{\CP^n} (f(z),g(z))_d \ dz,
\]
where the integration is with respect to the volume form on $\CP^n$ induced by the standard Fubini-Study metric on $\CP^n$.
The restriction of this last Hermitian inner product to real sections of $\mathcal{O}_{\CP^n}(d)$ agrees up to normalization by a constant with the Bombieri product 
(after identifying $\R[x_0,x_1,\ldots, x_n]_{(d)}$ with the space of real holomorphic sections of  $\mathcal{O}_{\CP^n}(d)$). Thus, amongst all possible  $O(n+1)$-invariant
measures on $\R[x_0,x_1,\ldots, x_n]_{(d)}$, the Kostlan measure seems to be the most natural one from the point of view of algebraic geometry.

An equivalent, and often more practical, approach to Gaussian ensembles is to build 
$f$ as a linear combination, using independent Gaussian coefficients,  of the vectors in
an orthonormal basis for the associated scalar product.
The Kostlan ensemble has the distinguished property of being 
the unique Gaussian ensemble that is invariant under any orthogonal 
changes of variables while simultaneously having the monomials as an orthogonal basis.
We can sample $f$ by placing independent Gaussians $\xi_\alpha$ 
in front of the monomial basis (here $\alpha$ is a multi-index):
$$f(x)=\sum_{|\alpha|=d}\xi_\alpha x^{\alpha}, \quad \xi_\alpha\sim N\left(0,\binom{d}{\alpha} \right).$$ 
Note that there are other orthogonally-invariant models, but their
description requires special functions (spherical harmonics), see \cite{Kostlan95, FLL}.

\subsection{A general construction}
Consider a random vector $v=(v_1, \ldots, v_{2n-3})$ in $\R^{2n-3}$ whose entries are independent Gaussian variables distributed as: 
\be 
v_j\sim N\left(0,\binom{2n-4}{j-1}\right)\quad j=1, \ldots,  2n-3.\ee
Let now $v^{(1)}, \ldots, v^{(n-1)}$ be independent random vectors all distributed as $v$. We define the random $(2n-2)\times (2n-2)$ matrix $\hat{J}_n$ as:
\be\hat J_n=\left[\begin{array}{ccccc}v_1^{(1)} & 0 & \ldots & v_{1}^{(n-1)} & 0 \\v_2^{(1)} & v_1^{(1)} &  & v_2^{(n-1)} & v_{1}^{(n-1)} \\\vdots & v_2^{(1)} &  & \vdots & v_2^{(n-1)} \\v_{2n-3}^{(1)} & \vdots &  & v_{2n-3}^{(n-1)} & \vdots \\0 & v_{2n-3}^{(1)} & \ldots & 0 & v_{2n-3}^{(n-1)}\end{array}\right]. \ee
\begin{thm}\label{thm:averagereal}The average number $E_n$ of real lines on a random Kostlan hypersurface of degree $2n-3$ in $\mathbb{R}\emph{P}^n$ is
\be E_n=\left(\frac{(2n-3)^{n-1}}{\Gamma(n)}\prod_{k=0}^{2n-3}\binom{2n-3}{k}^{-1/2}\right)\EE|\det \hat J_n|.\ee
\end{thm}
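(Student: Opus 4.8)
The plan is to realize the count of lines as the number of zeros of a random section of a vector bundle and to evaluate the resulting Kac--Rice integral. A line $\ell\subset\RP^n$ is a point of $Gr(2,n+1)$, and $\ell$ lies on $\{f=0\}$ exactly when the induced section $\sigma_f$ of $E:=\textrm{sym}^{2n-3}(\tau^*_{2,n+1})$ vanishes at $\ell$; here $E$ has rank $2n-2=\dim Gr(2,n+1)$. Since a Kostlan section is almost surely transverse to the zero section, the number of lines is a.s.\ finite and equals the set-theoretic count, and the vector-bundle Kac--Rice formula gives
\be E_n=\int_{Gr(2,n+1)}\rho(\ell)\,\omega_{Gr(2,n+1)}(\ell),\qquad \rho(\ell)=\EE\bigl[\,|\det\nabla\sigma_f(\ell)|\ \big|\ \sigma_f(\ell)=0\,\bigr]\cdot p_{\sigma_f(\ell)}(0),\ee
where $\nabla$ is any connection (at a zero the covariant derivative is connection-independent), $|\det\nabla\sigma_f(\ell)|$ is taken with respect to the Pl\"ucker metric on $T_\ell Gr(2,n+1)$ and any inner product on the fiber $E_\ell$, and $p_{\sigma_f(\ell)}$ is the Gaussian density of the $E_\ell$-valued vector $\sigma_f(\ell)$ for that same inner product. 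Two reductions make this tractable: by $O(n+1)$-invariance of the Kostlan distribution $\rho$ is constant, so $E_n=\rho\cdot\Vol\bigl(Gr(2,n+1)\bigr)$ and it suffices to evaluate $\rho$ at one line; and in the product $|\det\nabla\sigma_f(\ell)|\cdot p_{\sigma_f(\ell)}(0)$ the dependence on the fiber inner product cancels, so we may use the monomial frame of $E$ and treat its elements as orthonormal.

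I would evaluate $\rho$ at $\ell_0=\mathrm{span}(e_0,e_1)$ in the standard affine chart: a nearby plane is spanned by $e_0+\sum_{i\ge 2}a_{i0}e_i$ and $e_1+\sum_{i\ge 2}a_{i1}e_i$, with coordinates $A=(a_{ij})_{2\le i\le n,\,j=0,1}$. Since the Pl\"ucker embedding sends $A$ to $e_0\wedge e_1+\sum_i a_{i1}\,e_0\wedge e_i-\sum_i a_{i0}\,e_1\wedge e_i+O(|A|^2)$ and the Fubini--Study metric is Euclidean in first-order coordinates at $[e_0\wedge e_1]$, the $a_{ij}$ are orthonormal for the Pl\"ucker metric at $\ell_0$, so $\omega_{Gr(2,n+1)}$ agrees with $dA$ there. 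In this chart $\sigma_f$ is represented, in the monomial frame $\{s^kt^{2n-3-k}\}$, by the coefficient vector $c(A)=(c_0(A),\dots,c_{2n-3}(A))$ of the binary form $g_A(s,t)=f\bigl(s,t,a_{20}s+a_{21}t,\dots,a_{n0}s+a_{n1}t\bigr)$, and $\sigma_f(\ell_0)=0$ iff $c(0)=0$. Now $c_k(0)=\xi_{(k,\,2n-3-k,\,0,\dots,0)}\sim N\bigl(0,\binom{2n-3}{k}\bigr)$, independent across $k$, so $p_{c(0)}(0)=(2\pi)^{-(n-1)}\prod_{k=0}^{2n-3}\binom{2n-3}{k}^{-1/2}$.

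It remains to compute the Jacobian $J=\partial c/\partial A|_{A=0}$, which represents $\nabla\sigma_f(\ell_0)$ at the zero. Differentiating, $\partial_{a_{i0}}g_A|_{A=0}=s\cdot(\partial_{x_i}f)(s,t,0,\dots,0)$ and $\partial_{a_{i1}}g_A|_{A=0}=t\cdot(\partial_{x_i}f)(s,t,0,\dots,0)$, where $(\partial_{x_i}f)(s,t,0,\dots,0)=\sum_{j=0}^{2n-4}w^{(i)}_j\,s^j t^{2n-4-j}$ has independent Gaussian coefficients $w^{(i)}_j=\xi_\alpha\sim N\bigl(0,(2n-3)\binom{2n-4}{j}\bigr)$, with $\alpha_0=j$, $\alpha_1=2n-4-j$, $\alpha_i=1$ and all other entries $0$. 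These multi-indices are disjoint from those entering $c(0)$, so $J$ is independent of $c(0)$ and the conditioning in $\rho$ is vacuous. Reading off the coefficients of $s\cdot(\partial_{x_i}f)$ and $t\cdot(\partial_{x_i}f)$, the pair of columns of $J$ indexed by $a_{i0},a_{i1}$ equals $\sqrt{2n-3}$ times the $i$-th $(2n-2)\times 2$ block of $\hat J_n$ with its two columns exchanged (after the identification $w^{(i)}_{j-1}=\sqrt{2n-3}\,v^{(i)}_j$). Scaling all $2n-2$ columns and a column permutation then give $|\det J|=(2n-3)^{n-1}|\det\hat J_n|$ in distribution, so $\rho=(2n-3)^{n-1}(2\pi)^{-(n-1)}\prod_{k}\binom{2n-3}{k}^{-1/2}\,\EE|\det\hat J_n|$. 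Multiplying by $\Vol\bigl(Gr(2,n+1)\bigr)=(2\pi)^{n-1}/\Gamma(n)$ — which is $2\pi^2$ when $n=3$, and in general follows from the product-of-sphere-volumes formula for real Grassmannians together with Legendre's duplication formula — yields the asserted expression for $E_n$.

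The conceptual ingredients here (Kac--Rice for sections of vector bundles, constancy of $\rho$ by invariance, cancellation of the fiber metric, and orthonormality of the chart coordinates at $\ell_0$) are standard, so the real work is the explicit identification of the joint Gaussian law of $(c(0),J)$ at a single line with the combinatorial matrix $\hat J_n$; the point requiring most care is keeping the variances and the Grassmannian volume normalization consistent so that no spurious constant survives. The transversality statement used to justify the plain, unsigned count — that a Kostlan section of $\textrm{sym}^{2n-3}(\tau^*_{2,n+1})$ is a.s.\ transverse to the zero section — is a routine general-position argument and would be recorded separately.
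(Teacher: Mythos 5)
Your proposal is correct and follows essentially the same route as the paper: Kac--Rice for the section of $\textrm{sym}^{2n-3}(\tau^*_{2,n+1})$, constancy of the density by $O(n+1)$-invariance, an explicit computation of the value density and of the Jacobian law at a single line (with the same observation that the relevant Kostlan coefficients are disjoint, so the conditioning is vacuous), identification with $\hat J_n$ after extracting $\sqrt{2n-3}$, and multiplication by $|Gr(2,n+1)|=(2\pi)^{n-1}/\Gamma(n)$. The only difference is technical: you compute at $\ell_0$ in the affine graph chart, checking first-order orthonormality via the Pl\"ucker embedding, whereas the paper uses the rotation (exponential-map) parametrization on the oriented Grassmannian and passes through the double cover; both yield the same matrix structure and constants.
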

\begin{proof}We will start with a general construction that works for the Grassmannian $Gr(k,m)$ and only at the end of the proof we will specialize to the case $k=2, m=n+1.$

For $i=1, \ldots, k$ and $j=k+1, \ldots, m$ consider the matrix $E_{ij}\in \mathfrak{so} (m)$ which consists of all zeros except for having a $1$ in position $(i,j)$ and a $-1$ in position $(j,i)$.
Then $e^{tE_{ij}}\in O(m)$ is the clockwise rotation matrix of an angle $t$ in the plane, $\textrm{span}\{e_i, e_j\}$, spanned by the $i^{th}$ and $j^{th}$ standard basis vectors. Let  now $t=(t_{ij})\in \R^{k(m-k)}$ and consider the map $R:\mathbb{R}^{k(m-k)}\times \textrm{span}\{e_1, \ldots, e_k\}\to \R^m$ defined by:
\be \label{eq:RRR} R(t, y)=\left(e^{\sum_{ij}{t_{ij}E_{ij}}}\right)\cdot y=R_t\cdot y.\ee
Let $Gr^+(k,m)$ denote the Grassmannian of \emph{oriented} $k$-planes in $\R^m$. As a Riemannian manifold $Gr^{+}(k,m)$ is identified with its image under the spherical Pl\"ucker embedding. The image is equal to the set of simple, norm-one vectors in $\Lambda^{k}(\R^m)$; moreover the Riemannian metric induced on $G^{+}(k,m)$ from this embedding is the same as the metric induced by declaring the quotient map $SO(m)\to G^{+}(k,m)$ to be a Riemannian submersion (see \cite{Kozlov1} for more details).

The map $\psi:\R^{k(m-k)}\to Gr^+(k,m)$ defined by:
\be \psi(t)=R_te_1\wedge \cdots \wedge R_te_k\ee
is a local parametrization of $Gr^{+}(k,m)$ near $e_1\wedge\cdots \wedge e_k=\psi(0)$. In fact the map $\psi$ is the Riemannian exponential map centered at $e_1\wedge \cdots\wedge e_k$ (see \cite{Kozlov1}). As a consequence, the map $\varphi:U\to \R^{k(m-k)}$ (the inverse of $\psi$) gives a coordinate chart on a neighborhood $U$ of $e_1\wedge \cdots\wedge e_k$.

Let now $\tau^*_{k,m}$ denote the dual of the tautological bundle (on $Gr^{+}(k,m)$) and consider its symmetric $d$-th power $\textrm{sym}^d(\tau^*_{k,m})$. Note that a homogeneous polynomial $f\in \R[x_1, \ldots, x_m]_{(d)}$ defines a real holomorphic section $\sigma_f$ of the bundle $\textrm{sym}^d(\tau^*_{k,m})$, simply by considering the restriction $\sigma_f(w)=f|_w$ (here $w$ denotes the variable in $Gr^{+}(k,m)$). 
By construction, $R(\varphi(w), \cdot)^*f$  defines for every $w\in Gr^{+}(k,m)$ a polynomial in $\R[y_1, \ldots, y_k]_{(d)}$. As a consequence, the map
\be q:\textrm{sym}^d(\tau^*_{k,m})|_U\to U\times \R[y_1, \ldots, y_k]_{(d)}\ee
defined by:
\be f\mapsto \left(w, R(\varphi(w), \cdot)^*f\right),\quad f\in \textrm{sym}^d(\tau^*_{k,m})|_w \ee
gives a trivialization for $\textrm{sym}^d(\tau^*_{k,m})$ over $U$.

Since $Gr^{+}(k,m)$ is compact and connected, the map $\psi$ (the Riemannian exponential map) is surjective and we can choose $U$ such that $Gr^{+}(k,m)\backslash U$ has measure zero (in this way, integrating a continuous function over $U$ with respect the volume density of $Gr^{+}(k,m)$ gives the same result as integrating the continuous function over all of $Gr^{+}(k,m)$).

Let $f\in \R[x_1,\ldots, x_m]_{(d)}$ be a random Kostlan polynomial. Using $f$ we can define the random map:
\be\label{eq:coordinates} \tilde\sigma_f:\R^{k(m-k)}\to \R[y_1, \ldots, y_k]_{(d)}\simeq \R^{\binom{k+d-1}{d}}\ee
where $\tilde\sigma_f$ is nothing but $\sigma_f$ in the trivialization given by $q$.

Assume now that $k(m-k)=\binom{k+d-1}{d},$ so that $\textrm{rank}\left(\textrm{sym}^d(\tau^*_{k,m})\right)=\dim Gr(k,m)$. Then we can use the Kac-Rice formula \cite[Theorem 12.1.1]{AdlerTaylor} to count the average number of zeros of $\tilde{\sigma}_f$:
\be \EE\#\{\tilde{\sigma}_f=0\}=\int_{U}\EE\left\{|\det J(w)|\,\big|\, \sigma_f(w)=0\right\}p(0;w)\psi^*\omega_{Gr^{+}(k,m)},\ee
where $p(0;w)$ is the joint density at zero of the random vector $\sigma_f(w),$ $\omega_{Gr^{+}(k,m)}$ is the volume form induced by the spherical Pl\"ucker embedding and:
\be J(w)=\left(\nabla_i(\tilde{\sigma}_f)_j(w)\right)\ee
is the matrix of the derivatives of the components of $\tilde{\sigma}_f$ with respect to an orthonormal frame field at $w$. Since the polynomial $f$ is $O(m)$-invariant,  the quantity:
\be \rho(w)=\EE\left\{|\det J(w)|\,\big|\, \sigma_f(w)=0\right\}p(0;w)\ee
does not depend on $w\in U$ thus $\rho=\rho(w)$ is constant.
As a consequence:
\be \EE\#\{\tilde{\sigma}_f=0\}=|Gr^+(k,m))|\rho.\ee
Since $Gr^{+}(k,m)$ is a Riemannian double covering of $Gr(k,m)$ and the number of zeros of $\sigma_f$ as a section of the bundle $\textrm{sym}^d(\tau^*_{k,m})$ on $Gr^+(k,m)$ is twice the number of zeros as a section of $\textrm{sym}^d(\tau^*_{k,m})$ on  $Gr(k,m)$, then:
\be \EE\#\{w\in Gr(k,m)\,|\, \sigma_f(w)=0\}=|Gr(k,m)|\rho.\ee
Let  $w_0=e_1\wedge \cdots \wedge e_k$. It is easy to compute the density at zero of $\tilde{\sigma}_f(w_0)$. In fact, in the trivialization $q$ we have:
\be \tilde{\sigma}_f|_{w_0}=\sum_{|\alpha|=d}\xi_{\alpha_1, \ldots, \alpha_k, 0, \ldots, 0} y_1^{\alpha_1}\cdots y_k^{\alpha_k}.\ee
Hence the coefficients of $ \tilde{\sigma}_f|_{w_0}$ are independent random variables and are still Kostlan distributed
(in fact, the restriction of a Kostlan polynomial to a subspace is still a Kostlan polynomial). 
As a consequence:
\be p(0;w_0)=\prod_{\alpha_1+\cdots+\alpha_k=d}\left(\frac{\alpha_1!\cdots \alpha_k!}{d!2\pi }\right)^{1/2}.\ee
For the computation of $\EE\left\{|\det J(w)|\,\big|\, \sigma_f(w)=0\right\}$ we proceed as follows.
For $i=1, \ldots, k$ and $j=k+1, \ldots, m$, consider the curve $\gamma_{ij}:(-\epsilon, \epsilon)\to Gr^{+}(k,m)$:
\be \gamma_{ij}(s)=e^{sE_{ij}}e_1\wedge \cdots\wedge e^{sE_{ij}}e_k.\ee
For the curve $\gamma_{ij}$, note that:
\be \gamma_{ij}(0)=w_0\quad \textrm{and}\quad  \dot{\gamma}_{ij}(0)=\psi_*(\partial_{t_{ij}}).\ee
By construction,
$$\{\psi_*(\partial_{t_{ij}})\, |\,i=1, \ldots, k, j=k+1, \ldots, m\}$$
is an orthonormal basis for $T_{w_0}Gr^{+}(k,m)$, hence we can differentiate along the curves $\gamma_{ij}$ for computing the matrix $J$. By definition of the trivialization $q$ we have:
\be \tilde{\sigma}_f(\gamma_{ij}(s))(y_1, \ldots, y_k)=f\left(e^{sE_{ij}}y\right), \quad y=(y_1, \ldots, y_k, 0, \ldots, 0).\ee 
Note that:
\be e^{sE_{ij}}y=(y_1, y_2, \ldots, y_{i-1}, \underbrace{y_i \cos s}_{\textrm{$i$-th entry}}, y_{i+1}, \ldots, y_k, 0, \ldots, 0, \underbrace{y_i\sin s}_{\textrm{$j$-th entry}}, 0, \ldots, 0).\ee
In particular:
\begin{align}\tilde{\sigma}_f(\gamma_{ij}(s))(y)&=\sum_{|\alpha|=d} \xi_\alpha y_1^{\alpha_1}\cdots y_{i-1}^{\alpha_{i-1}}(y_i\cos s)^{\alpha_i}(y_i\sin s)^{\alpha_j}y_{i+1}^{\alpha_{i+1}}\cdots y_k^{\alpha_k}\\
&=\sum_{|\alpha|=d}\xi_\alpha(\cos s)^{\alpha_i} (\sin s)^{\alpha_j}y_1^{\alpha_1}\cdots y_{i-1}^{\alpha_{i-1}}y_{\alpha_{i}}^{\alpha_i+\alpha_j}y_{i+1}^{\alpha_{i+1}}\cdots y_k^{\alpha_k}.
\end{align}
Taking the derivative of each coefficient with respect to $s$ and evaluating at $s=0$ we get:
\begin{align}\frac{d}{ds}\left(\tilde{\sigma}_{f}(\gamma_{ij}(s))(y)\right)\big|_{s=0}&=\sum_{\alpha_1+\cdots+\alpha_k+1=d}\xi_\alpha y_1^{\alpha_1}\cdots y_{i-1}^{\alpha_{i-1}}y_i^{\alpha_i+1}y_{i+1}^{\alpha_{i+1}}\cdots y_k^{\alpha_k}.\\
&=\sum_{|\beta|=d}q_\beta y_1^{\beta_1}\cdots y_k^{\beta_k}\end{align}
from which we deduce that the coefficient $q_\beta$ is distributed as:
\be q_\beta=\xi_{\beta_1, \ldots, \beta_{i-1}, \beta_i+1, \beta_{i+1}, \ldots, \beta_k, 0, \ldots, 0, 1, \ldots, 0}\quad \textrm{(there is a $1$ in position $j$)}.\ee
From this we immediately see that $|\det J(w_0)|$ and $\tilde{\sigma}_f(w_0)$ are independent random variables (because of the ``$1$ in position $j$'' above) and:
\be \EE\left\{|\det J(w_0)|\,\big|\, \sigma_f(w_0)=0\right\}= \EE\left\{|\det J(w_0)|\right\}.\ee
The matrix $J(w_0)$ is in general quite complicated, but when we specialize to the case $k=2$ it becomes simpler (this is due to the fact that there is a natural way to order the monomials of a one-variable polynomial).  In fact, working in the above basis $\{\partial_{1,3}, \partial_{2,3}, \ldots, \partial_{1,m}, \partial_{2, m}\}$ for $T_{w_0}G^+(2, m)$ and $\{y_1^d, y_1^{d-1}y_2, \ldots, y_1y_2^{d-1}, y_2^d\}$ for $\R[y_1, y_2]_{(d)}$, we see that:
\be \partial_{1,j}\tilde{\sigma}_f=\sum_{|\alpha|=d, \alpha_j=1}\xi_\alpha y_1^{1+\alpha_1}y_2^{\alpha_2}\quad \textrm{and}\quad \partial_{2,j}\tilde{\sigma}_f=\sum_{|\alpha|=d, \alpha_j=1}\xi_\alpha y_1^{\alpha_1}y_2^{1+\alpha_2}.
\ee
For example, in the case $m=4$ the matrix $J(w_0)$ is:
$$ J (w_0)= \begin{bmatrix}
\xi_{2010} & 0 & \xi_{2001} & 0 \\
\xi_{1110} & \xi_{2010} & \xi_{1101} & \xi_{2001} \\
\xi_{0210} & \xi_{1110} & \xi_{0210} & \xi_{1101} \\
0 & \xi_{0210} & 0 & \xi_{0201} \\
\end{bmatrix} ,$$

In the case $k=2, m=n+1$, for the Grassmannian  $Gr(2,n+1)$ of lines in $\RP^n$, the matrix $J(w_0)$ has the same shape as $\hat J_n$; moreover collecting $\sqrt{2n-3}$ from each row of $J(w_0)$ we get a matrix that is distributed as $\hat{J}_n$:
\be \EE\left\{|\det J(w_0)|\right\}=(2n-3)^{\frac{2n-2}{2}}\EE|\det \hat{J}_n|.\ee
Thus the average number, $E_n$, of real lines on a random Kostlan hypersurface of degree $2n-3$ in $\mathbb{R}\textrm{P}^n$ is:
\begin{align}E_n&=|Gr(2, n+1)|\ p(0;w_0)\ (2n-3)^{\frac{2n-2}{2}}\ \EE|\det \hat{J}_n|\\
&=\frac{\pi^{n-\frac{1}{2}}}{\Gamma\left(\frac{n}{2}\right)\Gamma\left(\frac{n+1}{2}\right)}\prod_{k=0}^{2n-3}\left(\frac{k!\cdots (2n-3-k)!}{(2n-3)!2\pi }\right)^{1/2}(2n-3)^{\frac{2n-2}{2}}\ \EE|\det \hat{J}_n|\\
&=\frac{(2n-3)^{n-1}}{\Gamma(n)}\prod_{k=0}^{2n-3}\binom{2n-3}{k}^{-1/2}\EE|\det \hat J_n|,
\end{align}
where for the volume of the Grassmannian we have used the formula (see Remark \ref{remark:vg} below):
\be |Gr(2, n+1)|=\frac{\pi^{n-\frac{1}{2}}}{\Gamma\left(\frac{n}{2}\right)\Gamma\left(\frac{n+1}{2}\right)}.\ee\end{proof}

\begin{remark}[Volume of Grassmannians]\label{remark:vg}The formula $|Gr(2, n+1)|=\frac{\pi^{n-\frac{1}{2}}}{\Gamma\left(\frac{n}{2}\right)\Gamma\left(\frac{n+1}{2}\right)}$ follows from $|Gr(k,m)|=\frac{|O(m)|}{|O(k)||O(m-k)|}$ and the formula for the volume of the Orthogonal group (see \cite[Section 3.12]{Howard}):
\be  |O(k)|=\frac{2^k \pi^{\frac{k^2+k}{4}}}{\Gamma(k/2)\Gamma((k-1)/2)\cdots\Gamma(1/2)}
  .\ee
 Note that an analogous formula holds for the complex Grassmannian:
  \be |Gr_{\C}(k,m)|=\frac{|U(m)|}{|U(k)||U(m-k)|}\quad \textrm{and}\quad |U(k)| = \frac{2^k \pi^{\frac{k^2+k}{2}}}{\prod_{i=1}^{k-1} i!}.\ee
  
\end{remark}
\subsection{Intrinsic signs and lower bound}\label{sec:lower}
Generalizing Segre's observation $h-e=3$, Okonek and Teleman \cite{OkTel} have shown that to each real line $\ell$ on a generic real hypersurface $H$ of degree $2n-3$ in $\RP^n$ it is possible to canonically associate a sign $\epsilon(\ell)$ which satisfies the property  that $\sum_{\ell\subset H}\epsilon(\ell)$ does not depend on $H$, and that this number equals $(2n-3)!!$.
 The crucial observation is that  there exists a line bundle $L\to Gr(2,n+1)$ such that \cite[Proposition 12]{OkTel}:
\be K= \det(\textrm{sym}^{2n-3}(\tau^*_{2, n+1}))\otimes \det(TGr(2,n+1))= L\otimes L.\ee
Hence $K$ has a trivialization and  $\textrm{sym}^{2n-3}(\tau^*_{2, n+1})$ and $TGr(2,n+1)$ are said to be \emph{relatively oriented} (even if $\textrm{sym}^{2n-3}(\tau^*_{2, n+1})$  and the Grassmannian themselves might not be orientable). Relative orientation is all one needs to introduce an Euler number, which has the usual properties and is well defined up to a sign \cite[Lemma 5]{OkTel}. The fact that $K=L\otimes L$ allows in this specific case to canonically choose the relative orientation and the sign $\epsilon(\ell)$ for $\ell\subset H$ is defined in an intrinsic way. Using this notation Okonek and Teleman \cite{OkTel} and Kharlamov and Finashin \cite{finkh} have independently proved that:
\be R_n\doteq \left|\sum_{\ell\subset H}\epsilon(\ell)\right|=(2n-3)!!.\ee

We now show that using the Kac-Rice formula, and the knowledge that the modulus of a signed count is invariant, one can recover the value of $R_n$.

\begin{prop}\label{propo:signed}Using the above notation, we have:
\be R_n=(2n-3)!!.\ee
\end{prop}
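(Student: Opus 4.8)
The plan is to repeat the argument in the proof of Theorem~\ref{thm:averagereal} almost verbatim, replacing the unsigned Kac--Rice formula by the \emph{signed} one that computes a relative Euler number. Since a Kostlan hypersurface is generic with probability one, the section $\sigma_f$ of $\textrm{sym}^{2n-3}(\tau^*_{2,n+1})$ is a.s.\ transverse to the zero section, and by Okonek--Teleman the integer $\sum_{\ell\subset H}\epsilon(\ell)$ is therefore a.s.\ equal to the invariant $\pm R_n$; in particular it equals its own expectation over the Kostlan ensemble. Using the trivialization $q$ constructed in that proof — which realizes the canonical relative orientation coming from $K=L\otimes L$ up to a global sign, and hence up to a sign that is absorbed by $R_n=|\sum_\ell\epsilon(\ell)|$ — the signed Kac--Rice formula writes this expectation as $|Gr(2,n+1)|\,\rho_{\mathrm{signed}}$, with $\rho_{\mathrm{signed}}=p(0;w_0)\,\EE\{\det J(w_0)\mid\sigma_f(w_0)=0\}$ constant by $O(n+1)$-invariance. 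The independence of $\det J(w_0)$ and $\sigma_f(w_0)$ established there still applies (the ``$1$ in position $j$''), so the conditioning drops out and exactly the same normalization constants appear, yielding
\[
R_n=\left|\,\frac{(2n-3)^{n-1}}{\Gamma(n)}\prod_{k=0}^{2n-3}\binom{2n-3}{k}^{-1/2}\;\EE\det\hat J_n\,\right|.
\]

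It remains to evaluate $\EE\det\hat J_n$. I would expand $\det\hat J_n=\sum_\sigma\sgn(\sigma)\prod_i(\hat J_n)_{i,\sigma(i)}$ and take expectations term by term. Columns $2l-1$ and $2l$ of $\hat J_n$ involve only the copy $v^{(l)}$, the copies are independent, and each has independent centered Gaussian entries; hence a permutation $\sigma$ contributes only if, for every $l$, the two entries picked from columns $2l-1$ and $2l$ carry the same index of $v^{(l)}$, which by the shifted-column shape forces the two rows selected from that pair of columns to be consecutive, say $\{r_l,r_l+1\}$. Since $\sigma$ is a bijection, the blocks $\{[r_l,r_l+1]\}_l$ must tile $\{1,\dots,2n-2\}$ by dominoes, and there is exactly one such tiling, $\{1,2\},\{3,4\},\dots,\{2n-3,2n-2\}$; so the surviving $\sigma$ are parametrized by a bijection $\tau$ between the $n-1$ copies and the $n-1$ dominoes. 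Each such $\sigma$ is conjugate to $\tau$ acting diagonally on $(\text{domino})\times\{\text{top},\text{bottom}\}$, whence $\sgn(\sigma)=\sgn(\tau)^2=1$, and its summand equals $\prod_l\binom{2n-4}{r_l-1}=\prod_{j=0}^{n-2}\binom{2n-4}{2j}$, independent of $\tau$. Therefore
\[
\EE\det\hat J_n=(n-1)!\,\prod_{j=0}^{n-2}\binom{2n-4}{2j}.
\]

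Substituting this, using $\Gamma(n)=(n-1)!$ and — since $2n-3$ is odd — $\prod_{k=0}^{2n-3}\binom{2n-3}{k}=\bigl(\prod_{k=0}^{n-2}\binom{2n-3}{k}\bigr)^2$, the proposition reduces to the elementary identity
\[
(2n-3)^{n-1}\,\frac{\prod_{j=0}^{n-2}\binom{2n-4}{2j}}{\prod_{k=0}^{n-2}\binom{2n-3}{k}}=(2n-3)!!.
\]
I would prove this by writing $\binom{2n-3}{k}=\frac{2n-3}{2n-3-k}\binom{2n-4}{k}$ to cancel the factor $(2n-3)^{n-1}$, reducing it to $\prod_{j=0}^{M}\binom{2M}{2j}=2^{-M}\prod_{k=0}^{M}\binom{2M}{k}$ with $M=n-2$, which in turn follows from the duplication relation $(2j)!=2^{j}j!\,(2j-1)!!$ after regrouping the resulting super-factorials. (Sanity checks: $n=3$ gives $9\cdot\frac13=3$ and $n=4$ gives $5^3\cdot\frac{6}{50}=15$.)

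The main obstacle is not a single computation but making the reduction airtight on two fronts: first, confirming that the signed Kac--Rice density is indeed $p(0;w_0)\,\EE\det J(w_0)$ with the correct sign, i.e.\ that the trivialization $q$ is compatible with the Okonek--Teleman relative orientation up to a harmless global sign; and second, the permutation-sign bookkeeping in $\EE\det\hat J_n$ — showing that every surviving term has sign $+1$, which is precisely what makes the answer a \emph{positive} multiple of $(n-1)!$ with no cancellation. The geometric set-up and the final binomial identity are then routine.
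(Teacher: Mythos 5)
Your proposal is correct and follows essentially the same route as the paper: the signed Kac--Rice formula in the trivialization of Theorem \ref{thm:averagereal} together with the Okonek--Teleman relative orientation (so that the a.s.\ constant signed count equals its expectation, up to a global sign absorbed by the absolute value), the independence of $\det J(w_0)$ from $\sigma_f(w_0)$, and the evaluation $\EE\det \hat J_n=(n-1)!\prod_{k=1}^{n-1}\binom{2n-4}{2k-2}$ via the observation that only squared-variable monomials (your domino tilings) survive and all carry the same sign. Your final binomial manipulation is organized slightly differently from the paper's but amounts to the same computation and yields $(2n-3)!!$ correctly.
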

\begin{proof}
Let us fix a trivialization of $\textrm{sym}^{2n-3}(\tau^*_{2, n+1})$ on an open dense set $U\subset Gr(2, n+1)$ as in the proof of Theorem \ref{thm:averagereal}.
The lines on $H=\{f=0\}$ contained in $U$ are the zeroes of:
\be \tilde\sigma_f:\R^{2n-2}\to \R^{2n-2}.\ee
By \cite[Lemma 5]{OkTel} we know that:
\be \label{eq:upto}\sum_{\tilde\sigma_f(w)=0}\textrm{sign}\det (J\tilde\sigma_f)(w)=\epsilon\cdot\sum_{\ell\subset H, \ell\in U}\epsilon(\ell)\ee
and that the choice of the sign $\epsilon=\pm1$ is determined once the trivialization is fixed.
On the other hand, for a random map $F:\R^{2n-2}\to \R^{2n-2}$ (sufficiently smooth, e.g. satisfying the hypothesis of \cite[Theorem 12.1.1]{AdlerTaylor}), we have:
\be \EE \sum_{F(w)=0}\textrm{sign}\det (J F)(w)=\int_{\R^{2n-2}}\EE\{\det (J F)(w)\,|\, F(w)=0\}\cdot p(0; w)dw.\ee
This follows from \cite[Lemma 3.5]{Nicolaescu}, as the function $\textrm{sign}(\det(\cdot)):\R^{(2n-2)\times (2n-2)}\to \R$ is admissible (in the terminology of \cite{Nicolaescu}).
Applying this to the case $F=\tilde\sigma_f$ and arguing exactly as in the proof of Theorem \ref{thm:averagereal} we get:
\be\label{eq:upto2} \EE \sum_{\tilde\sigma_f(w)=0}\textrm{sign}\det (J\tilde\sigma_f)(w)=\left(\frac{(2n-3)^{n-1}}{\Gamma(n)}\prod_{k=0}^{2n-3}\binom{2n-3}{k}^{-1/2}\right)\EE \det \hat J_n\ee
Let us first evaluate the factor $\EE \det \hat J_n$. To simplify notations, let us denote by $x_{i,j}$ the random variable $v_{j}^{(i)}$ in the matrix $\hat J_n$ (we will also use this notation later in Remark \ref{remark1}). After taking expectation, by independence, the only monomials in the expansion of $\det \hat{J}_n$ that give a nonzero contribution are those with all squared variables (for example $ x_{1,1}^2x_{2,3}^2\cdots x_{n-1, 2n-3}^2$). 
These monomials all have the form:
\be x_{i_1, 1}^2x_{i_2, 3}^2\cdots x_{i_{n-1}, 2n-3}^2, \quad \{i_1, \ldots, i_{n-1}\}=\{1, \ldots, n-1\}.\ee
There are $(n-1)!$ many such monomials and because every second subscript is shifted by two, in the expansion of $\det\hat J_n$ they all appear with the same sign. Moreover the product of all the variances of the variables in each of these monomials equal:
\be \prod_{\textrm{$j$ odd}}\binom{2n-4}{j-1}=\prod_{k=1}^{n-1}\binom{2n-4}{2k-2}.\ee
From this we obtain:
\be\label{eq:signed1} \EE \det \hat J_n=(n-1)!\prod_{k=1}^{n-1}\binom{2n-4}{2k-2}.\ee
We look now at the term:
\begin{align} \prod_{k=0}^{2n-3}\binom{2n-3}{k}^{-1/2}&=\prod_{k=0}^{2n-3}\left(\frac{k! (2n-3-k)!}{(2n-3)!}\right)^{1/2}\\
&=\frac{1}{(2n-3)!^{n-1}}\left(\prod_{k=0}^{2n-3} k!\right)^{1/2}\left(\prod_{k=0}^{2n-3}(2n-3-k)!\right)^{1/2}\\
&=\frac{1}{(2n-3)!^{n-1}}\prod_{k=0}^{2n-3} k!.
\end{align}
Collecting all this together, we obtain:
\begin{align}\left(\frac{(2n-3)^{n-1}}{\Gamma(n)}\prod_{k=0}^{2n-3}\binom{2n-3}{k}^{-1/2}\right)\EE \det \hat J_n&=\frac{\left(\prod_{k=0}^{2n-3}k!\right) \left(\prod_{k=1}^{n-1}\binom{2n-4}{2k-2}\right)}{(2n-4)!^{n-1}}\\
&=\left(\prod_{k=0}^{2n-3}k!\right)\left(\prod_{k=1}^{n-1}\frac{1}{(2k-2)!(2n-2k-2)!}\right)\\
&=\left(\prod_{j=1}^{n-1}(2j-1)!\right)\left(\prod_{j=1}^{n-1}\frac{1}{(2n-2k-2)!}\right)\\
&=\left(\prod_{j=1}^{n-1}(2j-1)!\right)\left(\prod_{j=1}^{n-1}\frac{1}{(2j-2)!}\right)\\
&=\prod_{j=1}^{n-1}\frac{(2j-1)!}{(2j-2)!}\\
&=\prod_{j=1}^{n-1}(2j-1)=(2n-3)!!
\end{align}
Combining the last equation with \eqref{eq:upto2} and \eqref{eq:upto} concludes the proof.
\end{proof}
Let us now  denote by $\rho_n$ the number:
\be\rho_n=\frac{(2n-3)^{n-1}}{\Gamma(n)}\prod_{k=0}^{2n-3}\binom{2n-3}{k}^{-1/2}.\ee
Then Proposition \ref{propo:signed} shows that $\rho_n\cdot |\EE\det J_n|= (2n-3)!!$.
As a corollary, since:
\be E_n=\rho_n\cdot \EE |\det J_n|\geq\rho_n\cdot |\EE\det J_n|,\ee 
we derive the following lower bound for $E_n$.
Note however that the proof of $\rho_n\cdot |\EE\det J_n|\geq (2n-3)!!$ does not require \cite[Corollary 17]{OkTel}, hence this lower bound does not depend on the knowledge that the signed count is invariant.
\begin{cor}\label{cor:signed}The following inequality holds:
\be E_n\geq (2n-3)!!\ee
\end{cor}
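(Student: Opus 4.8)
The plan is to derive the bound directly from Theorem~\ref{thm:averagereal}, the elementary inequality $\EE|X|\geq|\EE X|$ applied to $X=\det\hat J_n$, and an explicit evaluation of $\EE\det\hat J_n$ — the latter being exactly the computation already performed in the proof of Proposition~\ref{propo:signed}.

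First I would invoke Theorem~\ref{thm:averagereal}, which writes $E_n=\rho_n\,\EE|\det\hat J_n|$ with $\rho_n>0$. By Jensen's inequality for the convex function $|\cdot|$ (equivalently, the triangle inequality for the expectation) one gets
\be E_n=\rho_n\,\EE|\det\hat J_n|\;\geq\;\rho_n\,\bigl|\EE\det\hat J_n\bigr|,\ee
so it suffices to show the right-hand side equals $(2n-3)!!$. This is precisely the content of the first half of the proof of Proposition~\ref{propo:signed}: expanding $\det\hat J_n$ and taking expectation, independence and centering annihilate every monomial except those in which each entry $v_j^{(i)}$ appears to an even power; the banded structure of $\hat J_n$ (each successive column pair being a downward shift of the previous one) forces the surviving monomials to be exactly the $(n-1)!$ products $x_{i_1,1}^2 x_{i_2,3}^2\cdots x_{i_{n-1},2n-3}^2$ over permutations $(i_1,\dots,i_{n-1})$ of $\{1,\dots,n-1\}$, and the shift-by-two pattern of the second subscripts makes all of these enter the expansion with the same sign. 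Summing their contributions yields $\EE\det\hat J_n=(n-1)!\prod_{k=1}^{n-1}\binom{2n-4}{2k-2}$, and the same algebraic simplification carried out in Proposition~\ref{propo:signed} gives $\rho_n\,\EE\det\hat J_n=(2n-3)!!$, completing the argument.

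The only nontrivial point is the sign claim in the determinant expansion — that all the ``fully squared'' monomials appear with one and the same sign — which is where the specific shape of $\hat J_n$ is used; everything else is either the already-established Theorem~\ref{thm:averagereal} or routine combinatorial manipulation, both of which I would simply quote. As the surrounding discussion stresses, this proof uses only the determinant computation of Proposition~\ref{propo:signed} and not the invariance of the signed count from \cite{OkTel}, so the lower bound $E_n\geq(2n-3)!!$ is genuinely independent of that input; I would point this out explicitly to emphasize that the estimate stands on its own.
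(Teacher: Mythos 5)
Your proposal is correct and follows exactly the paper's own route: the paper derives Corollary \ref{cor:signed} by combining $E_n=\rho_n\,\EE|\det \hat J_n|\geq \rho_n\,|\EE\det \hat J_n|$ with the explicit evaluation of $\EE\det \hat J_n$ carried out in the proof of Proposition \ref{propo:signed}, and it likewise remarks that this computation does not rely on the invariance of the signed count from \cite{OkTel}. Nothing further is needed.
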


\section{The average number of real lines on a random cubic}

\begin{thm}\label{thm:cubic}The average number of real lines on a random cubic surface in $\mathbb{R}\emph{P}^3$ is $6 \sqrt{2} - 3$.
\end{thm}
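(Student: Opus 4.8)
The plan is to specialize Theorem~\ref{thm:averagereal} to $n=3$ and then evaluate the single expectation $\EE|\det\hat J_3|$ it produces. For $n=3$ the degree is $2n-3=3$, the matrix $\hat J_3$ is $4\times 4$, and the prefactor is $\frac{3^{2}}{\Gamma(3)}\prod_{k=0}^{3}\binom{3}{k}^{-1/2}=\frac{9}{2}\cdot\frac{1}{3}=\frac{3}{2}$, so that $E_3=\frac32\,\EE|\det\hat J_3|$. Writing $a_i=v_1^{(i)}$, $b_i=v_2^{(i)}$, $c_i=v_3^{(i)}$ for $i=1,2$ (so $a_i,c_i\sim N(0,1)$ and $b_i\sim N(0,2)$, all independent), the columns of $\hat J_3$ are $(a_1,b_1,c_1,0)$, $(0,a_1,b_1,c_1)$, $(a_2,b_2,c_2,0)$, $(0,a_2,b_2,c_2)$; that is, $\hat J_3$ is the transpose of the Sylvester matrix of the two binary quadratic forms $q_i(y)=a_iy^2+b_iy+c_i$. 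Hence $\det\hat J_3=\mathrm{Res}(q_1,q_2)=(a_1c_2-a_2c_1)^2-(a_1b_2-a_2b_1)(b_1c_2-b_2c_1)$.

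The key observation is that this resultant is a \emph{fixed} quadratic form evaluated at a \emph{cross product}. Setting $u=(a_1,b_1,c_1)$, $v=(a_2,b_2,c_2)$ and $w=u\times v=(w_1,w_2,w_3)$, a direct computation gives $\det\hat J_3=w_2^{2}-w_1w_3$. Rescaling the middle coordinate to pass to standard Gaussian vectors $\tilde u,\tilde v\in\R^3$, i.e. $u=D\tilde u$, $v=D\tilde v$ with $D=\mathrm{diag}(1,\sqrt2,1)$, and using the cofactor identity $(Dx)\times(Dy)=(\det D)\,D^{-1}(x\times y)$, one gets $w=\mathrm{diag}(\sqrt2,1,\sqrt2)\,\tilde w$ with $\tilde w=\tilde u\times\tilde v$, so the problem reduces to computing $\EE\,\bigl|\tilde w_2^{2}-2\tilde w_1\tilde w_3\bigr|$ for independent standard Gaussians $\tilde u,\tilde v$ in $\R^3$.

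Now $\tilde w=\tilde u\times\tilde v$ is rotationally invariant in $\R^3$, since $(R\tilde u,R\tilde v)$ has the same law as $(\tilde u,\tilde v)$ and $R\tilde u\times R\tilde v=R(\tilde u\times\tilde v)$ for $R\in SO(3)$. Writing $\tilde w=\rho\,\omega$ with $\rho=|\tilde w|\ge 0$ and $\omega$ uniform on $S^2$ independent of $\rho$, homogeneity gives $\EE|\det\hat J_3|=\EE[\rho^{2}]\cdot\EE_{\omega}\bigl|\omega_2^{2}-2\omega_1\omega_3\bigr|$. The first factor is elementary: $\EE[\rho^{2}]=\EE|\tilde u|^{2}\,\EE|\tilde v|^{2}-\EE\langle\tilde u,\tilde v\rangle^{2}=9-3=6$. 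For the second, the orthogonal change of coordinates $p=(\omega_1+\omega_3)/\sqrt2$, $q=(\omega_1-\omega_3)/\sqrt2$ turns the form into $\omega_2^{2}-2\omega_1\omega_3=\omega_2^{2}+q^{2}-p^{2}=1-2p^{2}$ on $S^2$, and by Archimedes' hat-box theorem $p$ is uniform on $[-1,1]$, so $\EE_{\omega}\bigl|\omega_2^{2}-2\omega_1\omega_3\bigr|=\int_0^{1}|1-2p^{2}|\,dp=\frac{2\sqrt2-1}{3}$. Combining, $\EE|\det\hat J_3|=6\cdot\frac{2\sqrt2-1}{3}=4\sqrt2-2$, hence $E_3=\frac32\,(4\sqrt2-2)=6\sqrt2-3$.

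I expect the step requiring the most care to be the reduction in the second paragraph: recognizing the $4\times 4$ Jacobian determinant as the resultant of two quadratics, rewriting it as $w_2^{2}-w_1w_3$ with $w$ a cross product, and correctly tracking the nonstandard variances $(1,2,1)$ coming from the binomial coefficients through the diagonal rescaling. Once this is in place, the rotational-invariance factorization and the Archimedes reduction are routine. As an alternative to the last paragraph one could integrate $|\omega_2^{2}-2\omega_1\omega_3|$ directly in spherical coordinates, but the hat-box reduction to a single $1$-dimensional integral is cleaner and less error-prone.
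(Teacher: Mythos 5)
Your proposal is correct, and after the common first step (specializing Theorem \ref{thm:averagereal} to $n=3$ to get $E_3=\tfrac32\,\EE|\det \hat J_3|$, with the same determinant identity $\det\hat J_3=(a_1c_2-a_2c_1)^2-(a_1b_2-a_2b_1)(b_1c_2-b_2c_1)$, which the paper writes as $(af-cd)^2-2(bf-ce)(ae-bd)$), it evaluates the expectation by a genuinely different route. The paper computes the joint density of the three quadratic expressions via characteristic functions and Fourier inversion, obtaining $\rho(x,y,z)=\tfrac{1}{4\pi}e^{-|(x,y,z)|}/|(x,y,z)|$, and then evaluates $\EE|2xz-y^2|$ by viewing $(x,y,z)$ as a $2\times2$ symmetric matrix, changing to eigenvalue coordinates with Jacobian $|\lambda_1-\lambda_2|$, and finishing in polar coordinates. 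You instead recognize $\hat J_3$ as (the transpose of) a Sylvester matrix, write $\det\hat J_3=w_2^2-w_1w_3$ with $w=u\times v$, track the variances $(1,2,1)$ through the cofactor identity to reduce to $\EE|\tilde w_2^2-2\tilde w_1\tilde w_3|$ for $\tilde w$ a cross product of standard Gaussian vectors, and then exploit rotational invariance directly: the radial factor $\EE|\tilde w|^2=9-3=6$ by Lagrange's identity, and the angular factor $\int_0^1|1-2p^2|\,dp=\tfrac{2\sqrt2-1}{3}$ after an orthogonal change of coordinates on $S^2$ and Archimedes' hat-box theorem; all intermediate values ($\EE|\det\hat J_3|=4\sqrt2-2$) agree with the paper's. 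In effect your argument bypasses the Fourier inversion entirely (the paper's density $\tfrac{1}{4\pi}e^{-r}/r$ is exactly the law of a Gaussian cross product, which you never need explicitly) and replaces the eigenvalue change of variables by a one-dimensional integral; what the paper's route buys in exchange is the explicit closed-form density of the triple of $2\times2$ minors, which is reusable beyond this particular expectation, whereas your route is shorter, more elementary, and makes the rotational symmetry that underlies both computations transparent.
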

\begin{proof}Applying Theorem \ref{thm:averagereal} for the special case $n=3$ we get:
\be\label{eq:E3} E_3=\frac{3}{2}\EE|\det \hat{J}_3|.\ee
where:
$$ \hat{J_3} = \begin{bmatrix}
a & 0 & d & 0 \\
\sqrt{2}b & a & \sqrt{2}e & d \\
c & \sqrt{2}b & f & \sqrt{2}e \\
0 & c  & 0 & f \\
\end{bmatrix} ,$$
with $a,b,c,d,e,f$ independent standard normal random variables.

In order to compute $\EE |\det \hat{J}_3 |$,
we first observe that:
$$ \det \hat{J}_3 =  (af-cd)^2-2(bf-ce)(ae - bd),$$
which will lead us to work in terms of the random variables:
\begin{align*}
x &= (bf-ce), \\
y &= (af-cd) , \\
z &= (ae - bd),
\end{align*}
in order to compute the expectation of $|\det \hat{J}_3| = |2xz - y^2|$.
We use the method of characteristic functions (Fourier analysis)
in order to compute the joint density $\rho(x,y,z)$ of $x,y,z$.

By the Fourier inversion formula, we have:
\be\label{eq:inversion}
\rho(x,y,z) = \frac{1}{(2\pi)^3} \int_\R \int_\R \int_\R e^{-i(t_1 x  + t_2 y + t_3 z)}\ \hat{\rho}(t_1,t_2,t_3)\ dt_1 dt_2 dt_3,
\ee
where
\be\label{eq:char}
 \hat{\rho}(t_1,t_2,t_3) = \EE e^{i(t_1(bf-ce) + t_2(ae-bd) + t_3 (af-cd))}.
\ee

We notice that the expression:
$$ t_1(bf-ce) + t_2(ae-bd) + t_3 (af-cd) = (a,b,c,d,e,f)^T Q (a,b,c,d,e,f)$$ 
is a symmetric quadratic form in the Gaussian vector:
$$ (a,b,c,d,e,f),$$
where the matrix for the quadratic form is given by:
$$ Q = \frac{1}{2} \begin{bmatrix}
0 & 0 & 0 & 0 & t_1 & t_2 \\
0 & 0 & 0 & -t_1 & 0 & t_3 \\
0 & 0 & 0 & -t_2 & -t_3 & 0 \\
0 & -t_1 & -t_2 & 0 & 0 & 0 \\
t_1 & 0 & -t_3 & 0 & 0 & 0  \\
t_2 & t_3 & 0 & 0 & 0 & 0 \\
\end{bmatrix} .$$

Using \cite[Thm. 2.1]{Scar} while taking $t=1$ (and treating $t_1,t_2,t_3$ as parameters), we have:
\begin{align}
 \EE e^{i(t_1(bf-ce) + t_2(ae-bd) + t_3 (af-cd))} &= \frac{1}{\sqrt{\det(\mathbbm{1} - 2iQ)}} \\
 &= \frac{1}{1 + t_1^2 + t_2^2 + t_3^2}.
\end{align}
We can use this to compute:

\begin{align*}
\rho(x,y,z) &= \frac{1}{(2\pi)^3} \int_\R \int_\R \int_\R \frac{e^{-i(t_1 x  + t_2 y + t_3 z)}}{1+t_1^2 + t_2^2 + t_3^2}\ dt_1 dt_2 dt_3 \\
 &= \frac{1}{(2\pi)^3} \int_0^{2\pi} \int_0^{\pi} \int_0^{\infty} \frac{e^{-i \left|(x,y,z)\right| r \cos \phi}}{1+r^2} r^2 \sin \phi\ dr d\phi d\theta \\
&= \frac{1}{(2\pi)^2} \int_0^{\pi} \int_0^{\infty} \frac{e^{-i \left|(x,y,z)\right| r \cos \phi}}{1+r^2} r^2 \sin \phi \ dr d\phi \\
&= \frac{1}{(2\pi)^2} \int_0^{\infty} \frac{r^2}{1+r^2}  \frac{e^{-i \left|(x,y,z)\right| r \cos \phi}}{i |(x,y,z)| r} \bigg|_{\phi=0}^{\phi=\pi} dr \\
&= \frac{1}{2\pi^2} \frac{1}{|(x,y,z)|} \int_0^{\infty} \frac{r}{1+r^2}  \sin( |(x,y,z)| r) \ dr \\
&= \frac{1}{4\pi} \frac{e^{-|(x,y,z)|}}{|(x,y,z)|} .\\
\end{align*}

Thus, the expectation becomes:
\begin{align*}
\EE |\det \hat{J}_3 | &= \frac{1}{4\pi} \int_{\R^3} |2xz - y^2| \frac{e^{-|(x,y,z)|}}{|(x,y,z)|}\ dx dy dz \\
&= \frac{1}{8\pi} \int_{\R^3} |a_1 a_3 - a_2^2| \frac{e^{-\sqrt{\frac{a_1^2}{2} + a_2 + \frac{a_3^2}{2} }}}{\sqrt{\frac{a_1^2}{2} + a_2 + \frac{a_3^2}{2} }} \ da_1 da_2 da_3 
\end{align*}
where we have made the change of variables
$a_1 = \sqrt{2} x, a_2 = y, a_3 = \sqrt{2} z$.
Let us view $a_1,a_2,a_3$ as the entries of a symmetric matrix:
$$ A = \begin{bmatrix}
a_1 & a_2 \\
a_2 & a_3 
\end{bmatrix}.$$
Consider the coordinates given by the eigenvalues $\lambda_1, \lambda_2$ of $A$
along with the angle $\alpha \in [0,\pi/2)$
associated with the orthogonal transformation that diagonalizes $A$:
$$ M = \begin{bmatrix}
\cos \alpha & -\sin \alpha \\
\sin \alpha & \cos \alpha
\end{bmatrix}.$$
The Jacobian determinant for changing coordinates to $(\lambda_1,\lambda_2,\alpha)$ is $|\lambda_1 - \lambda_2|$.
We recognize $\frac{a_1^2}{2} + a_2 + \frac{a_3^2}{2} = \frac{\lambda_1^2 + \lambda_2^2}{2}$
as half the Frobenius norm of $A$, and $a_1 a_3 - a_2^2 = \lambda_1 \lambda_2$
as the determinant of $A$.
With respect to these coordinates, we have:
\begin{align*}
\EE |\det \hat{J}_3 |  &= \frac{\sqrt{2}}{8\pi} \int_0^{\pi/2}\int_{\R^2} |\lambda_1 \lambda_2| \frac{e^{-\sqrt{\frac{\lambda_1^2 + \lambda_2^2}{2}}}}{\sqrt{\lambda_1^2 + \lambda_2^2 }} |\lambda_1 - \lambda_2|\ d\lambda_1 d \lambda_2 \ d \alpha \\
&= \frac{\sqrt{2}}{16} \int_{\R^2} |\lambda_1 \lambda_2| \frac{e^{-\sqrt{\frac{\lambda_1^2 + \lambda_2^2}{2}}}}{\sqrt{\lambda_1^2 + \lambda_2^2 }} |\lambda_1 - \lambda_2|\ d\lambda_1 d \lambda_2 \\
&= \frac{\sqrt{2}}{16} \int_0^{2 \pi}\int_0^\infty r^3 |\cos \theta \sin \theta ( \cos \theta - \sin \theta)| e^{-r/\sqrt{2}}\ dr\ d \theta,\\
&= \frac{3\sqrt{2}}{2} \int_0^{2 \pi} |\cos \theta \sin \theta ( \cos \theta - \sin \theta)|\ d \theta,\\
&= \frac{3\sqrt{2}}{2} \left( \frac{8 - 2 \sqrt{2}}{3} \right) = 4 \sqrt{2} - 2
\end{align*}
where we have utilized polar coordinates $\lambda_1 = r \cos \theta, \lambda_2 = r \sin \theta$. Substituting the obtained number into \eqref{eq:E3} gives the desired result $E_3=6 \sqrt{2} - 3$.


\end{proof}

\section{The complex case}
\subsection{Complex Kostlan polynomials}
We consider now the space
$\C[x_0,x_1,\ldots, x_n]_{(d)}$ 
 of homogeneous polynomials of degree $d$ in $n+1$ variables.
 A complex Kostlan polynomial is obtained by replacing real Gaussian variables with complex Gaussian variables in the definition from section \ref{sec:kostlan}. Specifically we take:
$$f(x)=\sum_{|\alpha|=d}\xi_\alpha x^{\alpha}, \quad \xi_\alpha\sim N_\C\left(0,\binom{d}{\alpha} \right),$$
where as before the $\xi_\alpha$ are independent. 
The resulting probability distribution on the space $\C[x_0,x_1,\dots, x_n]_{(d)}$ is invariant by the action of $U(n+1)$ by change of variables (see \cite{Kostlan95, BCSS}).
\subsection{A general construction}
We will need the following elementary Lemma.
\begin{lemma}\label{lemma:A}Consider the $2m\times 2m$ real matrix $\tilde{A}$ defined by:
\be\tilde{A}=\left[\begin{array}{ccccc}A_{11} &   & \cdots &  & A_{1m} \\ &  &  &  &  \\\vdots &  &  &  & \vdots \\ &  &  &  &  \\A_{m1} &  & \cdots &  & A_{mm}\end{array}\right]\quad \textrm{where} \quad A_{ij}=\left[\begin{array}{cc}a_{ij} & b_{ij} \\-b_{ij} & a_{ij}\end{array}\right] \ee
and the $m\times m$ complex matrix $A^{\C}$ defined by:
\be A^\C=\left[\begin{array}{ccccc}a_{11}+ib_{11} &   & \cdots &  & a_{1m}+ib_{1m} \\ &  &  &  &  \\\vdots &  &  &  & \vdots \\ &  &  &  &  \\a_{m1}+ib_{m1} &  & \cdots &  & a_{mm}+ib_{mm}\end{array}\right].\ee
Then:
\be \det(\tilde{A})=\det(A^\C)\ \overline{\det(A^\C)} \ \ {\it \ and \  \ }\det(\tilde{A})\geq 0.\ee
\end{lemma}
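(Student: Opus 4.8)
The plan is to exploit the fact that the block structure of $\tilde A$ is precisely the realification of the complex matrix $A^\C$ under the standard identification $\C \cong \R^2$, $a+ib \leftrightarrow \left[\begin{smallmatrix} a & b \\ -b & a\end{smallmatrix}\right]$. Concretely, each $2\times 2$ block $A_{ij}$ is the matrix of multiplication by $a_{ij}+ib_{ij}$ on $\R^2$, so $\tilde A$ represents, in a suitable real basis, the $\C$-linear endomorphism of $\C^m$ given by $A^\C$. The determinant of the realification of a $\C$-linear map $T$ on an $m$-dimensional complex space is $|\det_\C T|^2 = \det_\C(T)\,\overline{\det_\C(T)}$; this is the identity we must establish, and it simultaneously gives $\det(\tilde A) = \det(A^\C)\,\overline{\det(A^\C)} \ge 0$.

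First I would make the identification precise. Reorder the standard basis of $\R^{2m}$ as $e_1, f_1, e_2, f_2, \dots, e_m, f_m$ (pairing the two coordinates of each $\R^2$ factor), and identify $\R^{2m} \cong \C^m$ by sending the $k$-th pair $(e_k, f_k)$ to the $k$-th complex coordinate, with $f_k = i\, e_k$. Under this identification the block $A_{ij}$ acting on the $j$-th $\R^2$ factor with output in the $i$-th factor is exactly the real-linear map "multiply by $a_{ij}+ib_{ij}$", which is $\C$-linear. Hence $\tilde A$ is the $\R$-linear realification of the $\C$-linear map $A^\C$. Second, I would invoke (or quickly prove) the standard linear-algebra fact that if $T: \C^m \to \C^m$ is $\C$-linear and $T_\R: \R^{2m} \to \R^{2m}$ is its realification, then $\det_\R(T_\R) = |\det_\C(T)|^2$. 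A clean way to see this: extend scalars, noting $\C^{2m} = \R^{2m}\otimes_\R \C$ decomposes under the complex structure $J$ into the $(+i)$- and $(-i)$-eigenspaces, on which $T_\R\otimes\C$ acts as $T$ and as $\bar T$ respectively; therefore $\det_\R(T_\R) = \det_\C(T)\det_\C(\bar T) = \det_\C(T)\overline{\det_\C(T)}$. Alternatively one can argue by density/continuity after reducing to the diagonalizable case, where $T$ has eigenvalues $\lambda_1,\dots,\lambda_m$ and $T_\R$ has eigenvalues $\lambda_1,\bar\lambda_1,\dots,\lambda_m,\bar\lambda_m$, giving $\det_\R(T_\R) = \prod_k |\lambda_k|^2$.

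The conclusion then follows immediately: $\det(\tilde A) = \det_\R(A^\C_\R) = \det_\C(A^\C)\,\overline{\det_\C(A^\C)} = |\det_\C(A^\C)|^2 \ge 0$. The only mild subtlety — the "main obstacle," though it is quite minor here — is bookkeeping with the basis reordering: one must check that permuting $\R^{2m}$ from the block-listed order (all first coordinates, then interleaved) to the paired order $e_1,f_1,e_2,f_2,\dots$ is the same permutation applied to rows and to columns, so its sign contributes a factor $(\pm1)^2 = 1$ to the determinant and does not affect the identity. Once that is noted, nothing further is needed.
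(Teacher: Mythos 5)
Your proposal is correct, and at its core it is the same computation as the paper's, repackaged as a general lemma about realification. The paper conjugates $\tilde A$ by a permutation to reach the form $\left[\begin{smallmatrix} M & N \\ -N & M\end{smallmatrix}\right]$ and then block-triangularizes it by conjugating with $\left[\begin{smallmatrix}\mathbbm{1} & 0\\ \pm i\mathbbm{1} & \mathbbm{1}\end{smallmatrix}\right]$, obtaining $\det(M+iN)\det(M-iN)$; that explicit conjugation is precisely the change of basis to the $\pm i$-eigenspaces of the complex structure which you invoke when you complexify $\R^{2m}\otimes_\R\C$, so your first proof of the identity $\det_{\R}(T_{\R})=\det_{\C}(T)\,\overline{\det_{\C}(T)}$ is the coordinate-free version of the paper's argument. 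What your route buys is brevity and generality (one standard linear-algebra fact does all the work), together with a genuinely different second proof via eigenvalues of $T_{\R}$ and density of diagonalizable matrices, which the paper does not use; what the paper's version buys is complete self-containedness, since everything reduces to a two-line matrix multiplication. Two harmless bookkeeping points: with your convention $f_k=i e_k$, the block $\left[\begin{smallmatrix} a & b\\ -b & a\end{smallmatrix}\right]$ is multiplication by $a-ib$ rather than $a+ib$ (either take $f_k=-ie_k$, or note that replacing $A^\C$ by $\overline{A^\C}$ does not change $\det(A^\C)\,\overline{\det(A^\C)}$, so the conclusion is unaffected); and the matrix $\tilde A$ as written is already in the paired, interleaved ordering, so your setup needs no reordering at all --- it is the paper's proof that requires the permutation --- though your observation that a simultaneous row-and-column permutation contributes $(\pm 1)^2=1$ to the determinant is correct in any case.
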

\begin{proof}Let $P\in GL(2m, \R)$ be the permutation matrix corresponding to the permutation:

\be \sigma=\left(\begin{array}{cccccccccc}1 & 2 & \cdots &m-1& m & m+1 & m+2 & \cdots & 2m-1 & 2m \\1 & 3 & \cdots &2m-3& 2m-1 & 2 & 4 & \cdots & 2m-2 & 2m\end{array}\right).\ee
Then the matrix $P^{-1}\tilde{A}P$ is of the form:
\be \left[\begin{array}{cc}M & N \\ -N & M\end{array}\right]\quad \textrm{where}\quad N=(a_{jl})\quad \textrm{and}\quad M=(b_{jl}).\ee
Note that $A^\C=M+iN$ and that:
\be\left[\begin{array}{cc}\mathbbm{1} & 0 \\ -i\mathbbm{1} & 1\end{array}\right]\cdot \left[\begin{array}{cc}M & N \\ -N & M\end{array}\right]\cdot\left[\begin{array}{cc}\mathbbm{1} & 0 \\ i\mathbbm{1} & 1\end{array}\right] =\left[\begin{array}{cc}M+iN & N \\ 0 & M-iN\end{array}\right] ,\ee
which implies $\det(P^{-1}\tilde{A}P)=\det(M+iN)\det(M-iN)$. Consequently:
\begin{align}\det(\tilde{A})&=\det(P^{-1}\tilde{A}P)\\
&=\det(M+iN)\det(M-iN)\\
&=\det(A^\C)\ \overline{\det(A^\C)}.
\end{align}

\end{proof}

Consider now a random vector $w=(w_1, \ldots, w_{2n-3})$ in $\C^{2n-3}$ whose entries are independent Gaussian variables distributed as: 
\be w_j\sim N_{\C}\left(0,\binom{2n-4}{ j-1}\right)\quad j=1, \ldots,  2n-3.\ee
In other words:
\be w_j\sim\sqrt{\frac{1}{2}\binom{2n-4}{j-1}}(\xi_1+i\xi_2)\ee where $\xi_1, \xi_2$ are two standard independent Gaussians.

Let  $w^{(1)}, \ldots, w^{(n-1)}$ be independent random vectors all distributed as $w$. We define the random $(2n-2)\times (2n-2)$ matrix $\hat{J}_n^\C$ as:
\be \hat{J}_n^\C=\left[\begin{array}{ccccc}w_1^{(1)} & 0 & \ldots & w_{1}^{(n-1)} & 0 \\w_2^{(1)} & w_1^{(1)} &  & w_2^{(n-1)} & w_{1}^{(n-1)} \\\vdots & w_2^{(1)} &  & \vdots & w_2^{(n-1)} \\w_{2n-3}^{(1)} & \vdots &  & w_{2n-3}^{(n-1)} & \vdots \\0 & w_{2n-3}^{(1)} & \ldots & 0 & w_{2n-3}^{(n-1)}\end{array}\right]\ee

\begin{thm}\label{thm:averagecomplex}The number $C_n$ of lines on a generic hypersurface of degree $2n-3$ in $\C\emph{\textrm{P}}^n$ is:
\be C_n=\left( \frac{(2n-3)^{2n-2}}{\Gamma(n)\Gamma(n+1)}\prod_{k=0}^{2n-3}\binom{2n-3}{k}^{-1}\right)\EE|\det \hat{J}^\C|^2\ee
\end{thm}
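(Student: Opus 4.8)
The plan is to follow the same template used in the proof of Theorem \ref{thm:averagereal}, carrying it out verbatim for the Grassmannian $Gr_\C(2,n+1)$ of complex lines in $\CP^n$, and then to convert the complex Kac-Rice computation into a real one via Lemma \ref{lemma:A}. First I would set up the local parametrization of $Gr_\C^+(2,n+1)$ (here orientation is automatic since complex Grassmannians are orientable, so there is no factor of $2$ to worry about) by exponentiating the same skew-Hermitian generators $E_{ij}$, obtaining a coordinate chart $\varphi$ on an open dense $U$ whose complement has measure zero, and a trivialization $q$ of $\mathrm{sym}^{2n-3}(\tau^*_{2,n+1})$ over $U$ just as before. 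Since $d=2n-3$ and $k=2$ the rank condition $k(m-k)=\binom{k+d-1}{d}$ becomes $2(n-1)=\binom{2n-2}{2n-3}=2n-2$, which holds, so the complex analogue of the Kac-Rice formula \cite[Theorem 12.1.1]{AdlerTaylor} (applied to $4$ complex $=8$ real local variables, or more efficiently in its holomorphic incarnation) counts the expected number of zeros of $\tilde\sigma_f$ as $|Gr_\C(2,n+1)|$ times the constant Kac-Rice density $\rho_\C$ at the base point $w_0=e_1\wedge e_2$.

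The key computational step is to identify $\rho_\C=\EE\{|\det J_\C(w_0)|^2 \mid \sigma_f(w_0)=0\}\,p_\C(0;w_0)$. Working in the chart $q$, the restriction $\tilde\sigma_f|_{w_0}=\sum_{|\alpha|=2n-3}\xi_{\alpha_0,\alpha_1,0,\ldots,0}y_1^{\alpha_0}y_2^{\alpha_1}$ has independent complex Kostlan coefficients, so $p_\C(0;w_0)=\prod_{|\alpha|=2n-3}\frac{\alpha_0!\alpha_1!}{(2n-3)!\pi}$ (the complex Gaussian density at zero contributes a $1/\pi$ rather than a $1/\sqrt{2\pi}$ per real dimension, here $1/\pi$ per complex coordinate). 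Differentiating along the curves $\gamma_{ij}(s)=e^{sE_{ij}}e_1\wedge e^{sE_{ij}}e_2$ exactly as in Theorem \ref{thm:averagereal} — now allowing complex $s$, or equivalently taking $\partial_s$ and $\partial_{\bar s}$, but since $\tilde\sigma_f(\gamma_{ij}(s))$ is holomorphic in $s$ only the holomorphic derivative survives — produces a matrix $J_\C(w_0)$ with exactly the same shape as the real $J(w_0)$ but with the $\xi$'s replaced by independent complex Kostlan variables; it is independent of $\tilde\sigma_f(w_0)$ for the same "$1$ in position $j$" reason, so the conditioning drops out. Collecting $\sqrt{2n-3}$ from each of the $2n-2$ rows gives $\EE|\det J_\C(w_0)|^2=(2n-3)^{2n-2}\EE|\det\hat J_n^\C|^2$, where $\hat J_n^\C$ has entries that are complex Gaussians with the variances $\binom{2n-4}{j-1}$ prescribed above. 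Assembling $C_n=|Gr_\C(2,n+1)|\cdot p_\C(0;w_0)\cdot(2n-3)^{2n-2}\cdot\EE|\det\hat J_n^\C|^2$ and inserting $|Gr_\C(2,n+1)|=\frac{|U(n+1)|}{|U(2)||U(n-1)|}$ from Remark \ref{remark:vg} together with $p_\C(0;w_0)=\prod_{k=0}^{2n-3}\frac{k!(2n-3-k)!}{(2n-3)!\pi}=\frac{1}{\pi^{2n-2}}\prod_{k=0}^{2n-3}\binom{2n-3}{k}^{-1}$, I would check that all powers of $\pi$ and all the Gamma-factors combine into the claimed prefactor $\frac{(2n-3)^{2n-2}}{\Gamma(n)\Gamma(n+1)}\prod_{k=0}^{2n-3}\binom{2n-3}{k}^{-1}$.

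The one genuinely new ingredient, and the role of Lemma \ref{lemma:A}, is to make sense of "$|\det|^2$ of a complex matrix" as an honest Kac-Rice Jacobian: writing each complex entry $w=a+ib$ and viewing the complex-linear map $dF_{w_0}:\C^{2n-2}\to\C^{2n-2}$ as a real-linear map on $\R^{4n-4}$, that real map has block structure $\tilde A$ with $2\times 2$ blocks $\left[\begin{smallmatrix}a&b\\-b&a\end{smallmatrix}\right]$, and Lemma \ref{lemma:A} gives $\det_\R(dF_{w_0})=|\det_\C\hat J_n^\C|^2\ge 0$. Hence the real Kac-Rice Jacobian factor is automatically the squared modulus of the complex determinant and is nonnegative, so no absolute value is lost and the orientation issue is moot — this is exactly why the complex count is "more combinatorial." The main obstacle I anticipate is purely bookkeeping: tracking the correct normalization of the complex Gaussian density (the $1/\pi$'s) and the volume of $Gr_\C(2,n+1)$ so that the final constant matches, and making sure the factor "$2$" that appeared in the real case (from $Gr^+$ being a double cover of $Gr$) is correctly absent here because $Gr_\C$ is already orientable and one does not pass to an oriented double cover. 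Once those normalizations are pinned down, the rest of the argument is a line-by-line transcription of the proof of Theorem \ref{thm:averagereal}.
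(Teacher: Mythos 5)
Your proposal is correct and follows essentially the same route as the paper's own (sketched) proof: a Kac--Rice computation on $Gr_\C(2,n+1)$ viewed as a real Riemannian manifold, with unitary invariance localizing the density at $w_0$, the same trivialization and curves $\gamma_{lj}$ as in Theorem \ref{thm:averagereal}, and Lemma \ref{lemma:A} converting the real Jacobian determinant into $|\det \hat J_n^\C|^2$, followed by the identical normalization bookkeeping with $p^\C(0;w_0)$ and $|Gr_\C(2,n+1)|=\pi^{2n-2}/(\Gamma(n)\Gamma(n+1))$. The only cosmetic difference is that the paper spans the real tangent space with the two curve families $e^{sE_{lj}}$ and $e^{isE_{lj}}$ rather than speaking of a holomorphic derivative, which amounts to the same block structure you describe.
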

\begin{proof}(sketch) The proof proceeds similarly to the proof of Theorem \ref{thm:averagereal}; again we use a general approach and then specialize to the case of the Grassmannian of lines in $\CP^n$.

We view the complex Grassmannian $Gr_\C(k,m)$ as a real manifold of dimension $2k(m-k)$ and the space $\C[z_1, \ldots, z_m]_{(d)}$ of homogeneous polynomials of degree $d$ as a real vector space of dimension $2\binom{m+d-1}{d}$:
\be f(z_1, \ldots, z_m)=\sum_{|\alpha|=d}(a_\alpha+i b_\alpha)z_1^{\alpha_1}\cdots z_m^{\alpha_m}, \quad a_\alpha, b_\alpha \in \R.\ee
We put a Riemannian structure on $Gr_\C(k,m)$ by declaring the quotient map $U(m)\to Gr_\C(k,m)$ to be a Riemannian submersion.

Consider the bundle $\textrm{sym}^d(\tau^*_{k,m})$ on $Gr_\C(k,m)$; a polynomial $f\in \C[z_1, \ldots, z_m]_{(d)}$ defines a section $\sigma_f$ of this bundle and, in the case $k(m-k)=\binom{k+d-1}{d}$, the number of zeros of $\sigma_f$ coincides with the number of $k$-planes on $\{f=0\}$. We build a random section of $\textrm{sym}^d(\tau^*_{k,m})$ by taking $f$ to be a \emph{complex} Kostlan polynomial:
\be f(z)=\sum_{|\alpha|=d}\xi_\alpha z_1^{\alpha_1}\cdots z_m^{\alpha_m}\ee
where the $\xi_\alpha$ are independent and distributed as:
\be\label{eq:xia} \xi_\alpha=\sqrt{\frac{d!}{\alpha_1!\cdots \alpha_m!}}\sqrt{\frac{1}{2}}(\xi_1+i\xi_2)\ee
and $\xi_1, \xi_2$ are standard, independent Gaussians. In this way the resulting probability distribution on the space of polynomials is invariant by the action of the unitary group $U(m)$.

Note that in the case $k=2, m=n+1$, with probability one the number of zeros of $\sigma_f$ equals the number of $k$-planes on a generic hypersurface of degree $d$ in $\CP^n$.

To trivialize the bundle we proceed as in the proof of Theorem \ref{thm:averagereal}, using now complex variables.
The invariance of $f$ under the action of the unitary group allows to reduce the computation for the Kac-Rice density at a point (which we again assume is $w_0=\textrm{span}\{e_1, \ldots, e_k\}$). We obtain:
\be \EE\#\{w\in Gr_\C(k,m)\,|\, \sigma_f(w)=0\}=|Gr_\C(k,m)|\rho_\C\ee
where now:
\be \rho_\C= \EE\left\{|\det \tilde{J}(w_0)|\,\big|\, \sigma_f(w_0)=0\right\}p^{\C}(0;w_0),\ee
with $p^{\C}(0;w_0)$ the density at zero of the vector of the real coefficients of $\tilde{\sigma}_f|_{w_0}\in \C[z_1, \ldots, z_k]_{(d)}$, and the matrix
\be \tilde J(w_0)=\left(\nabla_i(\tilde{\sigma}_f)_j(w_0)\right)\ee
the $2k(m-k)\times 2k(m-k)$ matrix of the derivatives of the coordinates of  $\sigma_f$ with respect to an orthonormal frame field at $w_0$.

Note that $\tilde{\sigma}_{f}({w_0})=f|_{w_0}$ is the random polynomial:
\be \sigma_f({w_0})(z_1, \ldots, z_k)=\sum_{|\alpha|=d}\xi_\alpha z_1^{\alpha_1}\cdots z_k^{\alpha_k}\ee
from which we immediately see that in the case $k=2, m=n+1$, for the Grassmannian $Gr_\C(2, n+1)$ of lines in $\CP^n$ we have:
\be p^\C(0, w_0)=\prod_{k=0}^{2n-3}\frac{1}{\pi}\binom{2n-3}{k}^{-1}.\ee
For the computation of $\tilde J(w_0)$ we use the orthonormal basis of $T_{w_0}Gr_\C(k, m)$ given by derivatives at zero of the curves:
\be \gamma_{lj}^1, \gamma_{lj}^2:(-\epsilon, \epsilon)\to Gr_\C(2,m)\ee defined for $l=1, \ldots, k$ and $j=k+1, \ldots, m$ by:
 \be \gamma_{lj}^1(s)=e^{sE_{lj}}e_1\wedge \cdots\wedge e^{sE_{lj}}e_k\quad\textrm{and}\quad \gamma_{lj}^2(s)=e^{i sE_{lj}}e_1\wedge \cdots\wedge e^{i sE_{lj}}e_k.\ee
 It is immediate to verify that:
 \begin{align} \frac{d}{ds}\left(\tilde{\sigma}_{f}(\gamma^1_{kj}(s))(z)\right)&=\sum_{\alpha_1+\cdots+\alpha_k+1=d} \xi_\alpha z_1^{\alpha_1}\cdots z_i^{\alpha_i+1}\cdots z_k^{\alpha_k}\\
 &=\sum_{|\beta|=d}q^{(1)}_\beta z_1^{\beta_1}\cdots z_k^{\beta_k}
 \end{align}
 and that:
 \begin{align}   \frac{d}{ds}\left(\tilde{\sigma}_{f}(\gamma^2_{kj}(s))(z)\right)&=\sum_{\alpha_1+\cdots+\alpha_k+1=d} i \xi_\alpha z_1^{\alpha_1}\cdots z_i^{\alpha_i+1}\cdots z_k^{\alpha_k}\\
 &=\sum_{|\beta|=d}q^{(2)}_\beta z_1^{\beta_1}\cdots z_k^{\beta_k}
  \end{align}
 From this we see that the coefficients $q_\beta^{(1)}$ and $q_\beta^{(2)}$ are distributed as:
 \be\label{eq:xic1} q^{(1)}_\beta=\xi_{\beta_1, \ldots, \beta_{i-1}, \beta_i+1, \beta_{i+1}, \ldots, \beta_k, 0, \ldots, 0, 1, \ldots, 0}\quad \textrm{(there is a $1$ in position $j$)}\ee
 and:
 \be \label{eq:xic2} q_\beta^{(2)}=i \cdot \xi_{\beta_1, \ldots, \beta_{i-1}, \beta_i+1, \beta_{i+1}, \ldots, \beta_k, 0, \ldots, 0, 1, \ldots, 0}\quad \textrm{(there is a $1$ in position $j$)}.\ee 
 In particular, we deduce from \eqref{eq:xic1} and \eqref{eq:xic2} that $\det \tilde{J}(w_0)$ and $\sigma_f(w_0)$ are independent, and consequently:
 \be \EE\left\{|\det \tilde{J}(w_0)|\,\big|\, \sigma_f(w_0)=0\right\}= \EE|\det \tilde{J}(w_0)|.\ee
Recalling the definition \eqref{eq:xia}, and specializing to the case $k=2, m=n+1$ of the Grassmannian of lines, we see again from  \eqref{eq:xic1} and \eqref{eq:xic2} that  the matrix $\tilde J(w_0)$ has the same shape as the matrix $\tilde A$ from Lemma \ref{lemma:A}. Collecting a factor of $\sqrt{d}=\sqrt{2n-3}$ from each row and using Lemma \ref{lemma:A} we obtain:
\be  \EE|\det \tilde{J}(w_0)|=(2n-3)^{2n-2}\EE\left(\det \hat{J}^\C_n\ \overline{\det \hat{J}_n^\C}\right).\ee
Putting all the pieces together, and using the formula $|Gr_{\C}(2, n+1)|=\frac{\pi^{2n-2}}{\Gamma(n)\Gamma(n+1)}$ (see Remark \ref{remark:vg}), we get:
\begin{align} C_n&=|Gr_\C(2, n+1)|\ p^\C(0, w_0) \ \EE|\det \tilde{J}(w_0)|\\
&=\frac{\pi^{2n-2}}{\Gamma(n)\Gamma(n+1)}\prod_{k=0}^{2n-3}\frac{1}{\pi}\binom{2n-3}{k}^{-1}(2n-3)^{2n-2}\EE\left(\det \hat{J}^\C_n\ \overline{\det \hat{J}_n^\C}\right)\\
&=\left( \frac{(2n-3)^{2n-2}}{\Gamma(n)\Gamma(n+1)}\prod_{k=0}^{2n-3}\binom{2n-3}{k}^{-1}\right)\EE|\det \hat{J}^\C_n|^2.
\end{align}
  \end{proof}
\begin{remark}[Real versus complex Gaussians]\label{remark1}Consider the matrix:
\be A_n(x)=  \left[\begin{array}{ccccc}x_{1,1} & 0 & \cdots & x_{n-1, 1} & 0 \\\vdots & x_{1,1} &  & \vdots & x_{n-1, 1} \\\binom{2n-4}{j-1}^{1/2}x_{1,j} & \vdots &  & \binom{2n-4}{j-1}^{1/2}x_{n-1,j} & \vdots \\\vdots & \binom{2n-4}{j-1}^{1/2}x_{1,j} &  & \vdots & \binom{2n-4}{ j-1}^{1/2}x_{n-1,j} \\x_{1, 2n-3} & \vdots &  & x_{n-1, 2n-3} & \vdots \\0 & x_{1, 2n-3} & \cdots & 0 & x_{n-1, 2n-3}\end{array}\right].\ee
The determinant $P_{n}(x)$ of $A_n(x)$ is a homogeneous polynomial of degree $D=2n-2$ in $N=(n-1)(2n-3)$ many variables and by construction we have:
\be\label{eq:edet1}  \EE|\det \hat J_n|=\frac{1}{(2\pi)^{N/2}}\int_{\R^N}|P_{n}(x)|\ e^{-\frac{1}{2}\|x\|^2}dx \ee
and:
\begin{align}\label{eq:edet2}\EE|\det \hat{J}^\C_n|^2&=\frac{1}{\pi^N}\int_{\C^N}P_n(z)\overline{P_n(z)}\ e^{-\|z\|^2}dz.
\end{align}
Recall that, given a homogeneous polynomial $ P(z)=\sum_{|\alpha|=D}P_\alpha z_1^{\alpha_1}\cdots z_N^{\alpha_N}$ of degree $D$ in $N$ variables, we have denoted by $\|P\|_B$ its Bombieri norm:
\be\label{eq:bombieri} \|P\|_B=\left(\sum_{|\alpha|=D}|P_\alpha|^2\frac{\alpha_1!\cdots \alpha_N!}{D!}\right)^{\frac{1}{2}}.\ee
Then, it is possible to rewrite \eqref{eq:edet2} as:
\be\label{eq:Bombieri} \EE|\det \hat{J}^\C_n|^2=(2n-2)!\ \|P_{n}\|_B^2.\ee
\end{remark}
\subsection{The $27$ lines on a complex cubic}
\begin{cor}\label{cor:C}
There are $27$ lines on a generic cubic in $\mathbb{C}\emph{P}^3$.
\end{cor}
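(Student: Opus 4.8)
The plan is to specialize Theorem~\ref{thm:averagecomplex} to $n=3$ and then evaluate the two explicit quantities appearing in it. With $n=3$ we have $2n-3=3$ and $2n-2=4$, and the constant in Theorem~\ref{thm:averagecomplex} becomes
\be \frac{3^{4}}{\Gamma(3)\Gamma(4)}\prod_{k=0}^{3}\binom{3}{k}^{-1}=\frac{81}{2\cdot 6}\cdot\frac{1}{1\cdot 3\cdot 3\cdot 1}=\frac{81}{108}=\frac{3}{4}, \ee
so that $C_3=\tfrac{3}{4}\,\EE|\det\hat J_3^{\C}|^{2}$ and the whole problem reduces to computing the single number $\EE|\det\hat J_3^{\C}|^{2}$.

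For the latter I would invoke Remark~\ref{remark1}: $\EE|\det\hat J_n^{\C}|^{2}=(2n-2)!\,\|P_n\|_B^{2}$, hence $\EE|\det\hat J_3^{\C}|^{2}=24\,\|P_3\|_B^{2}$, where $P_3=\det A_3(x)$ is a homogeneous polynomial of degree $4$ in the six variables $x_{1,1},\dots,x_{2,3}$. Expanding this $4\times 4$ determinant exactly as in the proof of Theorem~\ref{thm:cubic} gives
\be \det A_3 = (x_{1,1}x_{2,3}-x_{1,3}x_{2,1})^{2}-2(x_{1,2}x_{2,3}-x_{1,3}x_{2,2})(x_{1,1}x_{2,2}-x_{1,2}x_{2,1}), \ee
which is a sum of seven distinct monomials. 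Summing $|P_\alpha|^{2}\,\alpha!/4!$ over them (with $\alpha!$ the multinomial weight) gives $\|P_3\|_B^{2}=\tfrac{3}{2}$, so $\EE|\det\hat J_3^{\C}|^{2}=24\cdot\tfrac{3}{2}=36$ and therefore $C_3=\tfrac{3}{4}\cdot 36=27$.

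An equivalent route, mirroring the proof of Theorem~\ref{thm:cubic}, would be to write $\det\hat J_3^{\C}=Y^{2}-2XZ$ with $X=bf-ce$, $Y=af-cd$, $Z=ae-bd$ for independent standard complex Gaussians $a,\dots,f$, compute the joint density of $(X,Y,Z)\in\C^{3}$ by Fourier inversion --- the characteristic function being again of the Gaussian quadratic-form type handled by \cite{Scar}, now applied to an explicit $12\times 12$ real matrix obtained by splitting the six complex Gaussians into twelve real coordinates --- and then integrate $|Y^{2}-2XZ|^{2}$ against it. In either approach the only step needing care is routine combinatorial bookkeeping: in the first, listing the monomials of $\det A_3$ with the correct multinomial coefficients; in the second, assembling the larger quadratic form. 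The Remark~\ref{remark1} route is the shorter of the two, and is the computation I would write up.
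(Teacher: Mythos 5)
Your proposal is correct and follows essentially the same route as the paper: specialize Theorem~\ref{thm:averagecomplex} to $n=3$ (the prefactor $3/4$), use the Bombieri-norm identity of Remark~\ref{remark1} to get $\EE|\det\hat J_3^{\C}|^2=4!\,\|P_3\|_B^2=36$, and conclude $C_3=27$; your value $\|P_3\|_B^2=3/2$ agrees with the paper's $36/4!$. The Fourier-inversion alternative you mention is only an aside and not needed.
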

\begin{proof}This is the case $n=3$ in the previous theorem. We have:
\begin{align} C_3&=|Gr_\C(2,4)|\cdot p^\C(0; w_0)\cdot3^4\cdot \EE|\det \hat{J}^\C_3|^2\\
&=\frac{\pi^4}{12}\cdot\frac{1}{9\pi^4}\cdot 81\cdot \EE|\det \hat{J}^\C_3|^2\\
&=\frac{3}{4}\EE|\det \hat{J}^\C_3|^2\,
\end{align}
 For the computation of $\EE|\det \hat{J}^\C_3|^2$ we use \eqref{eq:Bombieri}. We have the following expression for $P_3(x)=\det A_3(x)$:
\be x_{13}^2x_{21}^2-2x_{12}x_{13}x_{21}x_{22}+2x_{11}x_{13}x_{22}^2-2x_{12}^2x_{21}x_{23}-2x_{11}x_{13}x_{21}x_{23}-2x_{11}x_{12}x_{22}x_{23}+x_{11}^2x_{23}^2.\ee
Recalling \eqref{eq:bombieri} we can immediately compute the Bombieri norm of $P_3$:
\be \|P_3\|_B^{2}=\frac{36}{4!}.\ee
From this we get $\EE|\det \hat{J}^\C_3|^2=4!\|P_3\|^2_B=36,$ and consequently $C_3=27$.
\end{proof}

\section{Asymptotics}
The main purpose of this section is to prove Theorem \ref{thm:sqrt}, which gives the asymptotic \eqref{eq:sqrt} of $E_n$ in the logarithmic scale, as discussed above in Section \ref{sec:discussion} (the ``square root law'').
This will follow from a combination of the lower bound given in  Corollary \ref{cor:signed} and the upper bound that we will prove in Proposition \ref{propo:upper}.

\subsection{The upper bound}
The strategy of our proof can be described as follows.  In order to simplify notations, 
let us absorb the variances in the variables of the matrix $A_n(x)$ 
and consider it as the matrix $B_n(u)$ with entries the random variables 
$u_{i,j}=\binom{2n-4}{j-1}x_{i,j}$, so that $\det A_n(x)=\det B_n(u).$ 
\be\label{eq:B} 
B_n(u)=  \left[\begin{array}{ccccc}u_{1,1} & 0 & \cdots & u_{n-1, 1} & 0 \\\vdots & u_{1,1} &  & \vdots & u_{n-1, 1} \\u_{1,j} & \vdots &  & u_{n-1,j} & \vdots \\\vdots & u_{1,j} &  & \vdots & u_{n-1,j} \\u_{1, 2n-3} & \vdots &  & u_{n-1, 2n-3} & \vdots \\0 & u_{1, 2n-3} & \cdots & 0 & u_{n-1, 2n-3}\end{array}\right].
\ee
We will use both the double-index notation $u_{i,j}$
as well as single-index notation $u_k$,
$k=1,2,..,N$ for the $N=(n-1) (2n-3)$ 
many variables appearing in $B_n(u)$.

Given a permutation $\sigma \in S_{2n-2}$ we consider the product:
\be \label{eq:mono}\prod_{i=1}^{2n-2}B_n(u)_{\sigma(i),i}=u_1^{\alpha_1}\cdots u_N^{\alpha_N}.\ee 
We will call $u_1^{\alpha_1}\cdots u_N^{\alpha_N}$ the monomial generated by the permutation $\sigma$.
We will denote by $I_1$ the set of all the multi-indices of all possible monomials generated by permutations in $S_{2n-2}$.
In this way we can write:
\be Q_n(u)=\det B_n(u)=\sum_{\alpha\in I_1} Q_\alpha u_1^{\alpha_1}\cdots u_{N}^{\alpha_N}.
\ee
We first prove that for each each permutation $\pi \in S_{2n-2}$, the monomial $\alpha_\pi$ generated by $\pi$ occurs with a non-zero integral coefficient in the polynomial $Q_n(u)$. In other words there are no cancellations occuring in the 
Laplace expansion of the determinant of $B_n(u)$: 
such cancellations are a priori possible since the same monomial can be generated by several permutations,
which is evident \eqref{eq:B} from the structure of the matrix. 
The proof of the above statement follows immediately from Lemma \ref{lemma:I1} proved below. 
We then prove (Lemma \ref{lemma:I2}) that in the expansion of the polynomial $Q_n(u)^2$,
each ``cross-term'' $Q_{\alpha}Q_{\beta}u^\alpha u^\beta$ that appears with a non-zero coefficient can be
``charged'' to some square term $Q_\gamma^2 u^{2\gamma}$. Of course, many cross-terms might be charged to the same square-term, but the number of different pairs $(\alpha,\beta)$ such that $\alpha+\beta = 2 \gamma$, is bounded by some number at most exponential in $n$. 
Together, these two lemmas reduce the
problem of bounding $\EE  (Q_n(u))^2$ (up to a loss of an exponential factor) to the problem of bounding
\be\EE \sum_\gamma Q_\gamma^2 u_\gamma^2,\ee and the latter can be bounded using linearity of expectation in terms of the Bombieri norm of the polynomial $P_n(x) = \det A_n(x)$ (Proposition \ref{propo:upper}).

We now prove the necessary preliminary results required to carry through the argument sketched above.

\begin{lemma}\label{lemma:I1}
For each $\alpha \in I_1$, $Q_\alpha \neq 0$ and $|Q_\alpha|>1.$
\end{lemma}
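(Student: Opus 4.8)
The plan is to read the integer coefficient $Q_\alpha$ as a \emph{signed count} of the permutations that generate the monomial $u^\alpha$, and to control both its size and its sign combinatorially. Writing $\Sigma_\alpha\subset S_{2n-2}$ for the set of permutations $\sigma$ whose product $\prod_{i}B_n(u)_{\sigma(i),i}$ is nonzero and equals $u^\alpha$, we have $Q_\alpha=\sum_{\sigma\in\Sigma_\alpha}\sgn(\sigma)$. The bound $|Q_\alpha|>1$ will follow from two facts: (i) all permutations in $\Sigma_\alpha$ carry the \emph{same} sign, so that no cancellation occurs and $|Q_\alpha|=|\Sigma_\alpha|$; and (ii) $|\Sigma_\alpha|\geq 2$ for every $\alpha\in I_1$.

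First I would set up the bookkeeping forced by the shift structure of $B_n(u)$. The two columns $2i-1,2i$ of the $i$-th block are the coefficient vector of the sequence $u_{i,1},\ldots,u_{i,2n-3}$ and its downward shift: the variable $u_{i,j}$ occupies row $j$ of column $2i-1$ and row $j+1$ of column $2i$. Thus an element of $\Sigma_\alpha$ amounts to choosing, for each block $i$, the unordered index pair $\{p_i,q_i\}$ (this is determined by $\alpha$, since the block-$i$ part of $u^\alpha$ is $u_{i,p_i}u_{i,q_i}$) together with an assignment of the two indices to the two columns: assigning $p_i$ to column $2i-1$ occupies rows $p_i$ and $q_i+1$, while the opposite assignment occupies rows $q_i$ and $p_i+1$. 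The global constraint is that all $2n-2$ occupied rows are distinct. I would then record the elementary move relating two elements of $\Sigma_\alpha$: a coordinated \emph{swap} that exchanges the column-assignments over a set of blocks so that the freed and newly-occupied rows match globally.

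The first crux, step (i), is to show that such a coordinated swap preserves the sign of the permutation. I would express the swap as a product of row-transpositions and check, via the parity bookkeeping above, that it is always even; the case $n=3$ is the model, where the unique monomial with $|\Sigma_\alpha|>1$ is $u_{1,1}u_{1,3}u_{2,1}u_{2,3}$, generated by exactly the two permutations $(1,4,3,2)$ and $(3,2,1,4)$, both of sign $-1$. Granting (i), integrality of $Q_\alpha$ immediately yields $Q_\alpha\neq 0$ together with $|Q_\alpha|=|\Sigma_\alpha|$.

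The main obstacle is step (ii): producing from any given generating permutation a \emph{second} distinct one, i.e.\ showing $|\Sigma_\alpha|\geq 2$. The natural strategy is to locate a block $i$ with non-consecutive indices $|p_i-q_i|\geq 2$, for which the in-block swap is admissible (whereas $q_i=p_i+1$ forces a within-block row collision and $p_i=q_i$ admits only one realization), and then to repair the global collision it creates by a compensating swap in another block. The delicate point, and where I expect the real work to lie, is guaranteeing that an admissible coordinated swap exists for \emph{every} $\alpha\in I_1$: the extremal monomials, whose blocks use only equal or consecutive indices, are precisely those for which the in-block freedom is absent, so the multiplicity must be extracted from the global reshuffling of whole blocks (as in the $n=3$ example), and pinning this down uniformly in $n$ is the sharpest part of the argument.
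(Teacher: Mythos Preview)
Your step (i) is exactly the content of the paper's proof: one shows that any two permutations $\sigma,\tau\in\Sigma_\alpha$ satisfy $\sgn(\sigma)=\sgn(\tau)$ (equivalently, $\tau^{-1}\sigma$ is even). The paper does this by proving that $\theta=\tau^{-1}\sigma$ preserves the parity of column indices, hence factors as $\theta_{\mathrm{odd}}\cdot\theta_{\mathrm{even}}$, and that the shift structure forces $\theta_{\mathrm{even}}$ to be conjugate (under the obvious bijection) to $\theta_{\mathrm{odd}}^{-1}$. Your ``coordinated swap is even'' formulation is the same idea in different clothing.

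The gap is your step (ii): the claim $|\Sigma_\alpha|\geq 2$ is \emph{false}, and the lemma does not assert it. The inequality ``$|Q_\alpha|>1$'' in the statement is a typo for ``$|Q_\alpha|\geq 1$''; indeed, when the lemma is invoked in the proof of Proposition~\ref{propo:upper} the authors use only ``$|Q_\gamma|\geq 1$''. A counterexample to $|\Sigma_\alpha|\geq 2$ already appears for $n=3$: the monomial $u_{1,1}^2u_{2,3}^2$ is generated only by the identity permutation (since $u_{1,1}$ sits solely in positions $(1,1),(2,2)$ and $u_{2,3}$ solely in $(3,3),(4,4)$), so $Q_\alpha=1$. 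More generally, any $\alpha$ in which every block uses a repeated index $p_i=q_i$ has $|\Sigma_\alpha|=1$. Thus your ``main obstacle'' is not an obstacle at all but an impossible target; once you have (i), integrality of $Q_\alpha$ gives $Q_\alpha\neq 0$ and $|Q_\alpha|\geq 1$, which is all that is needed downstream.
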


\begin{proof}[Proof of Lemma]

For a given multi-index $\alpha$ suppose that 
$\sigma = \sigma(1) \sigma(2) \cdots \sigma(2n-2)$ and $\tau = \tau(1) \tau(2) \cdots \tau(2n-2)$ 
are two permutations (each given in one-line notation)
that are associated with the monomial $u^\alpha$ 
in the Laplace expansion of the determinant of $B_n(u)$.
In other words, 
the corresponding terms in the Laplace expansion
are $\sgn(\sigma) u^\alpha $ and $\sgn(\tau)  u^\alpha$, respectively.
It suffices to show, for arbitrary such $\sigma$
and $\tau$, that we have $\sgn(\sigma) = \sgn(\tau)$, or equivalently that $\theta=\tau^{-1}\sigma$ is even.

Note that in the matrix 
$B_n(u)$ each variable appears exactly twice
and in positions that are separated by exactly one
increment in the row and the column values.
The assumption that $\sigma$ and $\tau$ generate the same monomial $u^\alpha$
then implies the following claim.

\begin{claim}\label{claim1}
Suppose $\sigma(j) \neq \tau(j)$.
Then if $j$ is odd, 
$\sigma(j+1) = \tau(j) + 1$ and $\tau(j+1) = \sigma(j) +1$,
and if $j$ is even,
$\sigma(j-1) = \tau(j) - 1$ and $\tau(j-1) = \sigma(j)-1$.
\end{claim}

We will also need the next claim.

\begin{claim}\label{claim2}
Suppose $\tau(j) = \sigma(k)$ for $j \neq k$.
Then the parities of $k$ and $j$ are the same; moreover if $j$ is odd 
$\sigma(j+1) = \tau(j) +1$ and $\tau(k+1) = \sigma(k) + 1$, if  $j$ is even 
$\sigma(j-1) = \tau(j) - 1$ and
$\tau(k-1) = \sigma(k) - 1$.
\end{claim}

We prove Claim \ref{claim2} in the case that $j$ is odd;
the case $j$ is even is similar and is omitted.
The variable in the matrix $B_n(u)$
in position $(\tau(j), j)$
must be selected by $\sigma$ as well.
Since $\sigma(j) \neq \tau(j)$ and $j$ is odd,
the only option is that $\sigma(j+1) = \tau(j) +1.$
We will see that the case $k$ is even leads to a contradiction.
Since $\tau(k) \neq \sigma(k)$ and $k$ is even, 
in order for the variable in position 
$(\sigma(k),k)$ to be selected by $\tau$ 
we must have $\tau(k-1) = \sigma(k) - 1$ (by Claim \ref{claim1}).
This implies that $\sigma(k-1) \neq \tau(k-1)$,
so that there is some $\ell \neq k-1$ such that $\sigma(\ell) = \tau(k-1)$.
In order for the variable in position 
$(\sigma(\ell), \ell)$ to be selected by $\tau$ 
we must have $\tau(\ell-1) = \sigma(\ell) - 1 = \tau(k-1) - 1$
(note that the alternative option $\tau(\ell+1) = \sigma(\ell) + 1$ is prevented since 
$\sigma(\ell)+1 = \tau(k-1) + 1 = \sigma(k) = \tau(j)$).
Iterating this argument $\tau(k-1) - 2$ more steps, 
and recalling equation \eqref{eq:B},
we reach the first row of the matrix $B_n(u)$
where we are forced to select an unpaired variable
contradicting the assumption that $\sigma$ and $\tau$
generate the same monomial.
This shows that $k$ must be odd as well and,
by the same reasoning as above, $\tau(k+1)=\sigma(k)+1$ as stated in the claim.

Let us now go back to the permutation $\theta=\tau^{-1}\sigma.$ 
Claim \ref{claim2} implies that $\theta$ preserves parity, so it can be written as the product of two permutations $\theta=\theta_\textrm{even}\cdot \theta_{\textrm{odd}}$, where $\theta_\textrm{even}$ 
(respectively, $\theta_\textrm{odd}$)
is in the symmetric group $S_{n-1, \textrm{even}}$ (respectively,  $S_{n-1, \textrm{odd}}$) 
on the set of \emph{even}  (respectively, \emph{odd}) numbers 
belonging to $\{1,\ldots,2n-2\}$. We identify  $S_{n-1, \textrm{even}}$ (respectively,  $S_{n-1, \textrm{odd}}$) with the subgroup of $S_{2n-2}$ of permutations which fixes each odd (respectively, even) number in $\{1,\ldots,2n-2\}$.
Claim \ref{claim2} can now be rewritten as:
\be\label{eq:isomo} \theta_{\textrm{odd}}(2k-1)=2j-1\iff \theta_{\textrm{even}}(2j)=2k.\ee
Note that the bijection:
\be\{2,4,\ldots, 2n-2\}\to \{1,3,\ldots, 2n-3\}, \quad 2k\mapsto 2k-1,\ee
induces an isomorphism $\psi:S_{n-1, \textrm{odd}}\to S_{n-1, \textrm{even}}$. 
Equation \eqref{eq:isomo} shows that $\psi(\theta_{\textrm{odd}})=\theta_{\textrm{even}}^{-1}.$ In particular, since the sign of $\theta_{\textrm{even}}$ and $\theta_{\textrm{even}}^{-1}$ are the same it follows that $\theta$ is even.
 \end{proof}
We will also need the following lemma.
\begin{lemma}\label{lemma:I2}Let $I_2$ be the set of all multi-indices $\gamma=(\gamma_1, \ldots, \gamma_N)$ such that there exist $\alpha, \beta\in I_1$ with $\alpha_i+\beta_i=2\gamma_i$ for all $i=1, \ldots, N.$ Then $I_2\subseteq I_1.$
\end{lemma}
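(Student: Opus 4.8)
The plan is to show that any multi-index $\gamma \in I_2$ is actually generated by a permutation, i.e.\ $\gamma\in I_1$. Suppose $\gamma \in I_2$, so there are $\alpha,\beta \in I_1$ with $\alpha_i+\beta_i = 2\gamma_i$ for every $i$. Let $\sigma$ and $\tau$ be permutations in $S_{2n-2}$ generating the monomials $u^\alpha$ and $u^\beta$ respectively. The key observation, exactly as in the proof of Lemma \ref{lemma:I1}, is the rigid structure of $B_n(u)$: each variable occurs exactly twice, in positions $(r,c)$ and $(r+1,c+1)$ for consecutive $r$ and $c$. I would first analyze the positions where $\sigma$ and $\tau$ disagree. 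As in Claim \ref{claim2}, the requirement that $u^\alpha$ and $u^\beta$ have the same ``support pattern up to the factor $2\gamma$'' forces $\tau^{-1}\sigma$ to preserve parity; so I can again split into the odd block and the even block, and on each block $\sigma$ and $\tau$ differ by a permutation that, via the isomorphism $\psi$, is inverse to the other.

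The heart of the argument is then constructive: I would exhibit an explicit permutation $\nu \in S_{2n-2}$ whose generated monomial has multi-index $\gamma$. The natural candidate is to ``interleave'' the choices made by $\sigma$ and $\tau$ block-by-block. Concretely, writing $\theta = \tau^{-1}\sigma = \theta_{\mathrm{even}}\cdot\theta_{\mathrm{odd}}$ and using the correspondence $\psi(\theta_{\mathrm{odd}}) = \theta_{\mathrm{even}}^{-1}$ from \eqref{eq:isomo}, one sees that the two permutations $\sigma$ and $\tau$ differ only by a ``shift by one'' within each pair of rows/columns $\{2k-1,2k\}$. Averaging the exponent vectors $\alpha$ and $\beta$ therefore corresponds to the permutation that picks, in each such pair of columns, the \emph{diagonal} entry rather than one of the two off-by-one entries — and by the structure of $B_n(u)$ this choice is always legitimate (it selects each variable the correct number of times). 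I would verify that this $\nu$ is a well-defined permutation (bijectivity follows from the parity-preservation and the involutive nature of the $\psi$-correspondence) and that the monomial it generates is precisely $u^{\gamma_1}_1\cdots u^{\gamma_N}_N$.

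The main obstacle I anticipate is bookkeeping: making precise which entry of $B_n(u)$ each of $\sigma$, $\tau$, $\nu$ selects in each column, and checking that the ``interleaved'' selection $\nu$ never tries to select a variable that doesn't appear in the required column, or selects some variable too many times. This is where the detailed position information from Claim \ref{claim1} and Claim \ref{claim2} (each variable at $(r,c)$ and $(r+1,c+1)$, with the first and last rows forcing unpaired variables) must be invoked carefully; the first-row/last-row boundary cases are the delicate ones, since there the ``shift by one'' has no room, and one must check that $\alpha_i$ and $\beta_i$ already agree there so that $\gamma_i = \alpha_i = \beta_i$ is achievable. Once the combinatorial dictionary between permutations and their monomials is set up cleanly, the inclusion $I_2\subseteq I_1$ should follow by directly exhibiting $\nu$.
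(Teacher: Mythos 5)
Your overall strategy---exhibiting a third permutation whose generated monomial is $u^\gamma$---is the same as the paper's, but the two steps you lean on do not hold up. First, you import the parity preservation of $\theta=\tau^{-1}\sigma$ and the correspondence $\psi(\theta_{\mathrm{odd}})=\theta_{\mathrm{even}}^{-1}$ from Lemma \ref{lemma:I1}; but Claims \ref{claim1} and \ref{claim2} there use in an essential way that $\sigma$ and $\tau$ generate the \emph{same} monomial, so that every variable selected by $\sigma$ is also selected by $\tau$, the same number of times. In the setting of Lemma \ref{lemma:I2} this fails: $\alpha_i=2$, $\beta_i=0$ is allowed, i.e.\ $\sigma$ may use twice a variable that $\tau$ never touches. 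Already for $n=3$ (notation of Theorem \ref{thm:cubic}), take $\sigma=\mathrm{id}$, generating $a^2f^2$, and $\tau$ with one-line notation $3\,4\,1\,2$, generating $c^2d^2$: these permutations share no variable at all, they do not ``differ by a shift by one within each pair of rows/columns'' (the rows differ by a shift of two), and none of the structure you borrow from Lemma \ref{lemma:I1} is available for them; yet $\alpha+\beta=2\gamma$ with $u^\gamma=acdf$, so this is exactly the kind of pair the lemma must handle.

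Second, your definition of $\nu$ (``pick the diagonal entry in each pair of columns'') is not a well-defined rule, and it skips the actual difficulty. The paper classifies each column pair $(2k-1,2k)$ into three cases (Cases \ref{itemlabel:case1}--\ref{itemlabel:case3}: identical choices, swapped choices, or each of $\sigma,\tau$ selecting its own variable twice) and sets $\omega(2k-1)=\sigma(2k-1)$ always, with $\omega(2k)=\sigma(2k)$ in the first two cases and $\omega(2k)=\tau(2k)$ in the third. The nontrivial point is that this mixed assignment is injective; in the paper this is Claim \ref{claim3}, proved by induction on the selected row, showing that Case \ref{itemlabel:case2} and Case \ref{itemlabel:case3} column pairs occur in matched ``crossing'' pairs. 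Your proposal replaces this with the assertion that bijectivity ``follows from the parity-preservation and the involutive nature of the $\psi$-correspondence,'' which, as noted above, is not established in this setting; and your anticipated boundary issue at the first and last rows is not where the difficulty lies. As it stands, the verification that $\nu$ is a permutation generating $u^\gamma$ is a genuine gap, not bookkeeping.
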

\begin{proof}
Let $\sigma$ and $\tau$ be two 
permutations that are associated with the monomials $u^\alpha$ and $u^\beta$, respectively,
in the Laplace expansion of $\det B_n(u)$.
It suffices to construct a third permutation $\omega$
that is associated to the monomial $u^\gamma$.
In order for $u^{\alpha + \beta}$ to be the square of a monomial,
we must have that $\alpha_i + \beta_i$ is either $0,2$, or $4$.
If $\alpha_i + \beta_i = 0$ then $\alpha_i=\beta_i = 0$.
If $\alpha_i + \beta_i = 4$ then $\alpha_i = 2$ and $\beta_i = 2$.
If $\alpha_i + \beta_i = 2$ then there are three possibilities:
we can have $\alpha_i=2$ and $\beta_i=0$ or $\alpha_i=1$ and $\beta_i=1$
or $\alpha_i=0$ and $\beta_i=2$.
In terms of $\sigma$ and $\tau$,
for each pair of columns of $B_n(u)$
with column numbers $2k-1$ and $2k$,
only the following three cases can occur.
\begin{enumerate}[{Case} 1.]
 \item
\label{itemlabel:case1}

We have $\sigma(2k-1) = \tau(2k-1)$,
which implies $\sigma(2k) = \tau(2k)$.

\item
\label{itemlabel:case2}
We have $\sigma(2k-1) \neq \tau(2k-1)$
(which implies $\sigma(2k) \neq \tau(2k)$) and $\tau(2k)=\sigma(2k-1)+1$ (which implies $\sigma(2k)=\tau(2k-1)+1$).

\item
\label{itemlabel:case3}
We have $\sigma(2k-1) \neq \tau(2k-1)$
(which implies $\sigma(2k+2) \neq \tau(2k+2)$),
and $\tau(2k)=\tau(2k-1)+1$ and $\sigma(2k)=\sigma(2k-1)+1.$
\end{enumerate}

Now we construct $\omega$ while, for each pair of column numbers, 
basing our choice for $\omega(2k-1)$ and $\omega(2k)$ 
in terms of the three cases.
In each of Cases \ref{itemlabel:case1}  and \ref{itemlabel:case2}, we simply take $\omega(2k-1) = \sigma(2k-1)$
and $\omega(2k) = \sigma(2k)$.
In the remaining Case \ref{itemlabel:case3},
we take $\omega(2k-1) = \sigma(2k-1)$ and $\omega(2k) = \tau(2k)$.

In order to see that $\omega$ is indeed a permutation,
it suffices to see that it is one-to-one.
Since $\sigma$ is a permutation, 
the only case that requires checking is
when we select $\omega(2k) = \tau(2k)$ in Case 3. We will need the following claim.

\begin{claim}\label{claim3}Assume a pair of columns with column numbers $2k-1$ and $2k$ falls into Case \ref{itemlabel:case2}  (respectively Case \ref{itemlabel:case3} ). Then there exists another pair of columns that falls into Case \ref{itemlabel:case2}  (respectively Case \ref{itemlabel:case3} ) with column numbers $2j-1$ and $2j$ with $j\neq k$ such that: $\sigma(2k-1)=\tau(2j-1)$ and $\sigma(2j)=\tau(2k)$.
\end{claim}

We note that this claim can be verified pictorially
from the equation \eqref{eq:B} of the matrix $B_n(u)$ 
by ``chasing'' the row-column structure.
However, we will proceed formally.
We prove the claim simultaneously 
for Case \ref{itemlabel:case2}  and Case \ref{itemlabel:case3}  
by induction on $q=\sigma(2k-1).$
For the base of the induction, let $\sigma(2k-1)=1$. Consider $j$ such that $\tau(j)=1$. 
Observe that $j$ must be odd, 
and the variable  in position $(j, 2)$ must also be selected. 
If the pair of columns $p=(c_{2k-1},c_{2k})$ falls into Case \ref{itemlabel:case2}  
(respectively Case \ref{itemlabel:case3} ), this forces the pairs of columns $(c_j, c_{j+1})$ to fall into Case \ref{itemlabel:case2}  (respectively Case \ref{itemlabel:case3} ). 

Assume now that the Claim holds for $\sigma(2k-1)\leq q.$ Pick $j$ such that $\tau(j)=\sigma(2k-1)=q+1$.
If $(c_{2k-1}, c_{2k})$ falls into Case \ref{itemlabel:case2}, then $j$ must be odd. In fact if it was even the pair $p'=(c_j, c_{j-1})$ would give (by inductive hypothesis) another pair $p''$ which is in the same Case as $p'$; this contradicts the injectivity of $\sigma$. Now, if $j$ is odd, then $\sigma(j+1)=\tau(j)+1$, otherwise we would fall into Case \ref{itemlabel:case3}  contradicting again the injectivity of $\tau$.

If now $(c_{2k-1}, c_{2k})$ falls into Case \ref{itemlabel:case3}, a similar argument shows that  $p'=(c_j, c_{j-1})$ must also fall into Case \ref{itemlabel:case3}.
This proves the claim.

Let us now finish verifying the injectivity of $\omega$. Consider the pair of columns $(c_{2k-1}, c_{2k})$, which falls into Case \ref{itemlabel:case3}. Claim \ref{claim3} produces  now a new pair $(c_{\ell}, c_{\ell+1})$ that also falls into Case \ref{itemlabel:case3}, such that $\ell=2j-1$, $\sigma(2k-1)=\tau(2j-1)$ and $\sigma(2j)=\tau(2k)$. By definition $\omega(2j-1)=\sigma(2j-1)$ and $\omega(2j)=\tau(2j)\neq \sigma(2j).$ Hence $\omega$ is injective.

Considering the variables that are selected by $\omega$ it is simple to check that it is associated with $\gamma$ as desired.
\end{proof}
We will now prove the claimed upper bound for $E_n$.
\begin{prop}\label{propo:upper}Using the notation of Theorem \ref{thm:averagereal} and Theorem \ref{thm:averagecomplex}, there exists $b>1$ such that:
\be E_n\leq \sqrt{n}b^n C_n^{1/2}.\ee
\end{prop}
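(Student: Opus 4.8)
The plan is to compare the integral representations of $E_n$ and $C_n$ coming from Theorem \ref{thm:averagereal} and Theorem \ref{thm:averagecomplex}, and to bound $\EE|\det\hat J_n|$ from above by (a controlled multiple of) the square root of $\EE|\det\hat J_n^\C|^2$. Concretely, recall from Remark \ref{remark1} that $\EE|\det\hat J_n^\C|^2=(2n-2)!\,\|P_n\|_B^2$ while $\EE|\det\hat J_n|=(2\pi)^{-N/2}\int_{\R^N}|P_n(x)|e^{-\|x\|^2/2}\,dx$, where $P_n=\det A_n$ and $N=(n-1)(2n-3)$. By Cauchy--Schwarz in the Gaussian space, $\EE|\det\hat J_n|\le\bigl(\EE(\det\hat J_n)^2\bigr)^{1/2}=\bigl(\EE(P_n(x))^2\bigr)^{1/2}$ over \emph{real} Gaussians, so everything reduces to comparing the second moment of $P_n$ over real Gaussians with its second moment over complex Gaussians (equivalently, with $(2n-2)!\,\|P_n\|_B^2$). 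This is exactly where the two Lemmas \ref{lemma:I1} and \ref{lemma:I2} enter.

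The key steps, in order, are: (1) Expand $\EE(P_n(x))^2=\EE Q_n(u)^2$ over real Gaussians after absorbing the variances (replacing $A_n$ by $B_n$), so that $Q_n(u)^2=\sum_{\alpha,\beta\in I_1}Q_\alpha Q_\beta u^{\alpha+\beta}$; taking expectation only the terms with $\alpha+\beta$ even survive, i.e. those indexed by $\gamma\in I_2$ with $\alpha+\beta=2\gamma$, and each contributes $Q_\alpha Q_\beta$ times the real-Gaussian moment $\EE\prod u_i^{2\gamma_i}=\prod(2\gamma_i-1)!!\,\mathrm{Var}(u_i)^{\gamma_i}$, which is at most $(2\gamma_i)!!\cdot(\text{variance factor})$ up to a universal constant per variable, hence at most $c^n$ times the corresponding coefficient of the Bombieri-type sum $\sum_\gamma Q_\gamma^2 u^{2\gamma}$ weighted appropriately. (2) Use Lemma \ref{lemma:I2} to ``charge'' each cross-pair $(\alpha,\beta)$ to the square index $\gamma=(\alpha+\beta)/2\in I_1$, and bound the multiplicity of this charging: the number of pairs $(\alpha,\beta)\in I_1\times I_1$ with $(\alpha+\beta)/2=\gamma$ is at most the number of permutations generating these monomials, hence at most $|S_{2n-2}|^2=((2n-2)!)^2$; more carefully one argues it is bounded by an exponential $b_0^n$ using the rigid row-column structure of $B_n$ established in the proof of Lemma \ref{lemma:I1} (each choice is essentially binary per pair of columns). (3) By Lemma \ref{lemma:I1}, $|Q_\alpha|\ge1$ for all $\alpha\in I_1$, so $|Q_\alpha Q_\beta|\le Q_\gamma^2$ whenever $\gamma=(\alpha+\beta)/2$ — indeed more simply $|Q_\alpha Q_\beta|\le\max(Q_\alpha^2,Q_\beta^2)$ and the total is absorbed into $\sum_{\gamma\in I_1}Q_\gamma^2$ times the exponential multiplicity — which lets us dominate $\EE(P_n(x))^2$ by $b_1^n\sum_\gamma Q_\gamma^2\cdot(\text{Bombieri weights})=b_1^n\|P_n\|_B^2\cdot\frac{1}{(2n-2)!}\cdot(\text{something})$. (4) Combine with the prefactors $\rho_n$ (real) versus the complex prefactor to recover $E_n\le\sqrt n\,b^n C_n^{1/2}$, tracking that the ratio of the two explicit prefactors in Theorems \ref{thm:averagereal} and \ref{thm:averagecomplex} contributes only an exponential-in-$n$ factor (the $(2n-3)^{n-1}$ vs $(2n-3)^{2n-2}$, the $\Gamma(n)$ vs $\Gamma(n)\Gamma(n+1)$, and the product of binomials to the $-1/2$ vs $-1$ powers all reduce, after taking the square root of the complex side, to $b^n$ up to the harmless $\sqrt n$).

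I expect the main obstacle to be step (2): obtaining a genuinely \emph{exponential} (not factorial) bound on the number of cross-pairs $(\alpha,\beta)\in I_1\times I_1$ mapping to a fixed square index $\gamma\in I_2$. A naive bound gives $((2n-2)!)^2$, which is far too large and would swamp the square-root law. The resolution should come from the combinatorics already developed for Lemma \ref{lemma:I1} and \ref{lemma:I2}: the matrix $B_n$ has a band structure in which, for each of the $n-1$ pairs of columns, the permutation's behavior on that column-pair is constrained to one of a bounded number of local patterns (the three Cases in the proof of Lemma \ref{lemma:I2}), and specifying $\gamma$ plus a bounded amount of extra data pins down $(\alpha,\beta)$; iterating over the $n-1$ column-pairs gives at most $b_0^{n}$ choices. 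Making this counting argument precise — i.e. showing the decomposition $\theta=\theta_{\mathrm{even}}\theta_{\mathrm{odd}}$ together with the parity-preservation and the isomorphism $\psi$ leave only $O(1)$ degrees of freedom per index pair $\{2k-1,2k\}$ — is the technical heart of the proof; everything else (Cauchy--Schwarz, Gaussian moment comparison, bookkeeping of $\Gamma$-factors) is routine.
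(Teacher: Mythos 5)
Your overall strategy is the same as the paper's (Cauchy--Schwarz over the real Gaussians, expansion of $Q_n(u)^2$, charging the cross-terms $Q_\alpha Q_\beta u^{\alpha+\beta}$ with $\alpha+\beta=2\gamma$ to square terms via Lemma \ref{lemma:I2}, comparison of the real Gaussian moments with the Bombieri norm via \eqref{eq:Bombieri}, and bookkeeping of the prefactors), but two of your central estimates are not justified as written. First, the absorption of the cross-coefficients: Lemma \ref{lemma:I1} only gives $|Q_\gamma|\geq 1$, and this does \emph{not} imply $|Q_\alpha Q_\beta|\leq Q_\gamma^2$; your fallback $|Q_\alpha Q_\beta|\leq\max(Q_\alpha^2,Q_\beta^2)$ does not help either, because the Gaussian moment attached to the pair $(\alpha,\beta)$ is $\EE u^{2\gamma}$ with $\gamma=(\alpha+\beta)/2$, not $\EE u^{2\alpha}$ or $\EE u^{2\beta}$, so you still need to compare $\max(Q_\alpha^2,Q_\beta^2)$ with $Q_\gamma^2$. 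The missing ingredient, which the paper supplies, is the complementary \emph{upper} bound $|Q_\alpha|\leq b_1^n$ for every $\alpha\in I_1$ (a fixed monomial is generated by at most $O(1)^n$ permutations in the Laplace expansion of $\det B_n(u)$, each contributing $\pm1$); combined with $Q_\gamma^2\geq1$ this yields $|Q_\alpha Q_\beta|\leq b_1^{2n}\leq b_1^{2n}Q_\gamma^2$, and only then does the charging argument close.

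Second, the multiplicity bound that you single out as the ``technical heart'' and leave open --- the number of pairs $(\alpha,\beta)\in I_1\times I_1$ with $\alpha+\beta=2\gamma$ --- requires none of the permutation combinatorics you invoke (the decomposition $\theta=\theta_{\mathrm{even}}\theta_{\mathrm{odd}}$, parity preservation, the isomorphism $\psi$): that machinery belongs to the proof of Lemma \ref{lemma:I1}, which you may simply cite. The paper's count is elementary: every exponent of a monomial in $I_1$ lies in $\{0,1,2\}$, so for each coordinate $i$ there are at most six choices of $(\alpha_i,\beta_i)$ with $\alpha_i+\beta_i=2\gamma_i$, and since the monomial $u^{2\gamma}$ involves at most $2n-2$ variables with nonzero exponent, the number of pairs is at most $6^{2n-2}\leq b_2^n$; your feared factorial bound $((2n-2)!)^2$ never enters. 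With these two points repaired your argument coincides with the paper's. Note finally that the prefactor comparison in your step (4) is in fact an identity rather than merely an exponential comparison: since $\Gamma(n+1)=n\,\Gamma(n)$, the real prefactor $\rho_n$ equals exactly $\sqrt{n}$ times the square root of the complex prefactor, which is precisely where the $\sqrt{n}$ in the statement comes from.
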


\begin{proof}
As a first step, we claim that there exists $b>1$ such that:
\be \label{eq:mainin} \EE | \det \hat{J}^\R_n  | \leq b^n\left(\EE |\det \hat{J}_n^\C|^2\right)^{1/2}.\ee
We want to apply the Cauchy-Schwarz inequality, and estimate the quantity:
\be \EE | \det \hat{J}^\R_n  | = \EE|\det B_n(u)|\leq \left(\EE \left(\det B_n(u)\right)^2\right)^{1/2}=\left( \EE Q_n(u)^2 \right)^{1/2}.\ee
We write now:
\be \label{eq:square}Q_n(u)^2=\sum_{\gamma\in I_1}Q_\gamma^{2}u_1^{2\gamma_1}\cdots u_N^{2\gamma_N}+\sum_{\alpha\neq \beta} Q_\alpha Q_\beta u_1^{\alpha_1+\beta_1}\cdots u_{N}^{\alpha_N+\beta_N},\ee
where $I_1$ is the index set defined in Lemma \ref{lemma:I1}.
Observe that, after taking expectation and using independence, in the second sum in \eqref{eq:square} only terms such that $\alpha_i+\beta_i$ is even for all $i=1, \ldots, N$ give a nonzero contribution:
\be\label{eqe1} \EE Q_n(u)^2=\sum_{\gamma\in I_1}Q_\gamma^{2}\EE u_1^{2\gamma_1}\cdots u_N^{2\gamma_N}+\sum_{\alpha\neq \beta\,\textrm{and  $\alpha+\beta$ ``even''}} Q_\alpha Q_\beta \EE u_1^{\alpha_1+\beta_1}\cdots u_{N}^{\alpha_N+\beta_N}.\ee
We now rewrite the double sum on the right as:
\be \sum_{\alpha\neq \beta\,\textrm{and  $\alpha+\beta$ even}} Q_\alpha Q_\beta \EE u_1^{\alpha_1+\beta_1}\cdots u_{N}^{\alpha_N+\beta_N}=\sum_{\gamma\in I_1}\left(\sum_{\alpha+\beta =2\gamma}Q_\alpha Q_\beta\right)u_1^{2\gamma_1}\cdots u_N^{2\gamma_n}\ee
where $I_2$ is the index set defined in Lemma \ref{lemma:I2}.

Note that there exists $b_1>1$ such that $|Q_\gamma|\leq b_1^n$ for every $\gamma\in I_1$. In fact there are only $O(1)^n$ many possible ways a given monomial can appear as one of the summands in the Laplace expansion of $\det B_n(u).$ Moreover there exists $b_2>1$ such that, for every fixed $\gamma$, the cardinality of the set of pairs $(\alpha,\beta)$ such that $\alpha+\beta =2\gamma$ is bounded by $b_2^n$. In fact, given $\gamma_i$ there are at most six possible pairs for $(\alpha_i, \beta_i)$ such that $\alpha_i+\beta_i=2\gamma_i$ (namely $(0,0)$, $(0, 2)$, $(1,1)$, $(1,2)$,  $(2,0)$ and $(2,2)$). In our case each monomial $u_1^{2\gamma_1}\cdots u_N^{2\gamma_n}$ can have at most $2n-2$ many variables with nonzero exponents, hence combinatorially we have at most $6^{2n-2}\leq b_2^n$ many pairs $(\alpha, \beta)$ with $\alpha+\beta=2\gamma$, as claimed.
As a consequence we can bound:
\be\label{eqe2} \sum_{\gamma\in I_2}\left(\sum_{\alpha+\beta =2\gamma}Q_\alpha Q_\beta\right)u_1^{2\gamma_1}\cdots u_N^{2\gamma_n}\leq (b_1b_2)^n\sum_{\gamma\in I_2} u_1^{2\gamma_1}\cdots u_N^{2\gamma_n}.\ee
We now use the fact that in the expansion of $\det B_n(u)$ no coefficient $Q_\gamma$ is zero for $\gamma\in I_1$ (by Lemma \ref{lemma:I1}; moreover $|Q_\gamma|\geq 1$ and as a consequence we can write:
\be \sum_{\gamma\in I_2} u_1^{2\gamma_1}\cdots u_N^{2\gamma_n}\leq \sum_{\gamma\in I_1} u_1^{2\gamma_1}\cdots u_N^{2\gamma_n}\leq \sum_{\gamma\in I_1} Q_\gamma^2 u_1^{2\gamma_1}\cdots u_N^{2\gamma_n}.\ee
In the first inequality we have used the fact that $I_2\subseteq I_1$ (Lemma \ref{lemma:I2}).
Combining this with \eqref{eqe1} and \eqref{eqe2} we get:
\be\label{eqe3} \EE Q_n(u)^2\leq b_3^n \sum_{\gamma\in I_1}Q_\gamma^{2}\EE u_1^{2\gamma_1}\cdots u_N^{2\gamma_N}.\ee
We now switch back to the variables $x_1, \ldots, x_N$ (which are standard independent Gaussians).
Recalling the definition of $P_n(x)=\det A_n(x)=\det B_n(u)=Q_n(u)$, we have:
\be \sum_{\gamma\in I_1}Q_\gamma^{2}\EE u_1^{2\gamma_1}\cdots u_N^{2\gamma_N}=\sum_{\gamma \in I_1} P_\gamma^2 \EE x_1^{2\gamma_1}\cdots x_N^{2\gamma_N}.\ee
We now look at $\EE x_1^{2\gamma_1}\cdots x_N^{2\gamma_N}$ for $\gamma\in I_1.$ Using independence:
\be \EE x_1^{2\gamma_1}\cdots x_N^{2\gamma_N}=\prod_{i=1}^N\EE x_i^{2\gamma_i}=\prod_{\{i\,|\,\gamma_i\neq 0\}} \EE x_i^{2\gamma_i}\leq b_4^n\prod_{\{i\,|\,\gamma_i\neq 0\}}\gamma_i!=b_4^n\prod_{i=1}^N\gamma_i! .\ee
For the inequality in the line above we have used the fact that 
only $2n-1$ many variables appear with a nonzero power, 
and for those variables we have $\gamma_i\leq 2$ which implies that 
the moment $\EE x_i^{2\gamma_i} = (2 \gamma_i - 1)!! \leq 2  \gamma_i ! $
(so we can take say $b_4 = 4$).
In particular, continuing from \eqref{eqe3} we get:
\begin{align} \EE P_n(x)^2=\EE Q_n(u)^2&\leq b_3^n \sum_{\gamma\in I_1}P_\gamma^{2}\EE x_1^{2\gamma_1}\cdots x_N^{2\gamma_N}\\
&\leq b_5^n\sum_{\gamma\in I_1}P_\gamma^2 \gamma_1!\cdots \gamma_N!\\
&=b_5^n(2n-2)!\sum_{\gamma}P_\gamma^2\frac{\gamma_1!\cdots \gamma_N!}{(2n-2)!}\\
&=b_5^n(2n-2)!\|P_n\|^2_{B}.
\end{align}
Recalling \eqref{eq:Bombieri}, we have $(2n-2)!\ \|P_{n}\|_B^2=\EE|\det \hat{J}^\C_n|^2,$ which finally implies \eqref{eq:mainin}:
\be \EE | \det \hat{J}^\R_n  | \leq b^n\left(\EE |\det \hat{J}_n^\C|^2\right)^{1/2}.\ee
We can finally use Theorem \ref{thm:averagereal}, and estimate the quantity $E_n$ as:
\begin{align} E_n&=\left(\frac{(2n-3)^{n-1}}{\Gamma(n)}\prod_{k=0}^{2n-3}\binom{2n-3}{k}^{-1/2}\right)\EE|\det \hat J_n^\R|\\
&=\sqrt{n}\left( \frac{(2n-3)^{2n-2}}{\Gamma(n)\Gamma(n+1)}\prod_{k=0}^{2n-3}\binom{2n-3}{k}^{-1}\right)^{1/2}\EE|\det \hat J_n^\R|\\
&\leq \sqrt{n}\left( \frac{(2n-3)^{2n-2}}{\Gamma(n)\Gamma(n+1)}\prod_{k=0}^{2n-3}\binom{2n-3}{k}^{-1}\right)^{1/2} b^n\left(\EE |\det \hat{J}_n^\C|^2\right)^{1/2}\\
&\leq \sqrt{n}b^n C_n^{1/2}.
\end{align}
This proves the statement in the proposition.

\end{proof}

\subsection{The square root law}
\begin{thm}\label{thm:sqrt}Using the notation of Theorem \ref{thm:averagereal} and Theorem \ref{thm:averagecomplex}, we have:
\be \lim_{n\to \infty}\frac{\log E_n}{\log C_n}=\frac{1}{2}.\ee
\end{thm}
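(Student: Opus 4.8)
The plan is to sandwich $E_n$ between the two bounds already at our disposal and then compare everything with the known asymptotics of $C_n$. First I would combine Corollary \ref{cor:signed} and Proposition \ref{propo:upper} into the single chain of inequalities
\[ (2n-3)!!\ \le\ E_n\ \le\ \sqrt{n}\,b^n\,C_n^{1/2}, \]
valid for all $n$ with $b>1$ a fixed constant. Since $C_n\to\infty$ (for instance by \eqref{thm:DZ}), we have $\log C_n>0$ for $n$ large; taking logarithms and dividing by $\log C_n$ gives
\[ \frac{\log\big((2n-3)!!\big)}{\log C_n}\ \le\ \frac{\log E_n}{\log C_n}\ \le\ \frac12+\frac{\tfrac12\log n+n\log b}{\log C_n}. \]
It therefore suffices to show that both outer expressions converge to $\tfrac12$ as $n\to\infty$.

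The key computation is the leading-order behaviour of the logarithms appearing above. From Zagier's formula \eqref{thm:DZ} we have $\log C_n=(2n-\tfrac72)\log(2n-3)+O(1)=2n\log n+O(n)$. On the other hand, writing $(2n-3)!!=(2n-2)!\big/\big(2^{n-1}(n-1)!\big)$ and applying Stirling's formula to the two factorials yields $\log\big((2n-3)!!\big)=n\log n+O(n)$. Substituting these estimates, the right-hand side of the displayed inequality equals $\tfrac12+O(n)\big/\big(2n\log n+O(n)\big)=\tfrac12+O(1/\log n)$, which tends to $\tfrac12$; and the left-hand side equals $\big(n\log n+O(n)\big)\big/\big(2n\log n+O(n)\big)$, which also tends to $\tfrac12$. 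The squeeze theorem then gives $\log E_n/\log C_n\to\tfrac12$, as claimed.

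I do not expect any serious obstacle here: all the real content has been placed in Proposition \ref{propo:upper} (the combinatorial control of the Laplace expansion of $\det B_n(u)$ via Lemmas \ref{lemma:I1} and \ref{lemma:I2}) and in the lower bound of Corollary \ref{cor:signed}. The one point to keep in mind is that the exponential slack $\sqrt{n}\,b^n$ in the upper bound, as well as the correction factor $2^{n-1}(n-1)!$ hidden in $(2n-3)!!$, contribute only at the scale $O(n)$, which is negligible compared with the dominant term $2n\log n$ of $\log C_n$; this is precisely why the crude exponential loss in Proposition \ref{propo:upper} is invisible at the logarithmic scale, and it is what makes the ``square root law'' come out exactly.
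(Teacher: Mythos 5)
Your proposal is correct and follows essentially the same route as the paper: both sandwich $E_n$ between $(2n-3)!!$ and $\sqrt{n}\,b^n C_n^{1/2}$ via Corollary \ref{cor:signed} and Proposition \ref{propo:upper}, then use $\log(2n-3)!!=n\log n+O(n)$ and $\log C_n=2n\log n+O(n)$ from \eqref{thm:DZ} and squeeze. Your Stirling verification of the double-factorial estimate is just a slightly more explicit rendering of a step the paper states directly.
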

\begin{proof}
Applying Corollary \ref{cor:signed} and Proposition \ref{propo:upper} 
we get:
\be (2n-3)!!\leq E_n\leq b^n\sqrt{n} C_n^{1/2}.\ee
We note that $\log (2n-3)!!=n \log(n)+O(n)$ and that, by \eqref{thm:DZ}, $\log C_n=2n \log(n)+O(n).$ As a consequence:
\be n\log(n)+O(n)=\log (2n-3)!!\leq \log E_n\leq \frac{1}{2}\log C_n+n\log b+O(\log n)\ee 
and, dividing by $\log C_n$:
\be \frac{n\log(n)+O(n)}{2n \log(n)+O(n)}\leq \frac{\log E_n}{\log C_n}\leq \frac{1}{2}+\frac{n \log b+O(\log n)}{2n \log (n)+O(n)}.\ee
Taking the limit $n\to \infty$ yields the result.
\end{proof}


\bibliographystyle{plain}
\bibliography{Cubic}
\end{document}